\newcommand{\ad}{\ensuremath{\operatorname{ad}}}
\newcommand{\Aut}{\ensuremath{\operatorname{Aut}}}
\newcommand{\SL}{\ensuremath{\operatorname{SL}}}
\newcommand{\PSL}{\ensuremath{\operatorname{PSL}}}
\newcommand{\PGL}{\ensuremath{\operatorname{PGL}}}
\newcommand{\Syl}{\ensuremath{\operatorname{\mathcal{S}yl}}}
\newcommand{\F}{\mathbb{F}}
\newcommand{\Z}{\mathbb{Z}}
\newcommand{\G}{\Gamma}
\newcommand{\s}{\sigma}
\newcommand{\bs}{\backslash}
\newcommand{\forget}[1]{}
\newcommand{\cC}{\mathcal{C}}
\newcommand{\cK}{\mathcal{K}}
\newcommand{\cS}{\mathcal{S}}
\newcommand{\cX}{\mathcal{X}}
\newcommand{\cY}{\mathcal{Y}}
\newcommand{\cZ}{\mathcal{Z}}
\newcommand{\la}{\langle}
\newcommand{\ra}{\rangle}
\newtheorem{theorem}{Theorem}
\newtheorem{prop}[theorem]{Proposition}
\newtheorem{lemma}[theorem]{Lemma}
\newtheorem{corollary}[theorem]{Corollary}
\newtheorem*{surface}{Surface Subgroup Corollary}
\theoremstyle{definition}
\newtheorem{defn}{Definition}
\newtheorem*{example}{Example}
\title[Cocompact lattices in Kac--Moody groups]{Cocompact lattices in complete Kac--Moody groups with Weyl group right-angled or a free product of spherical special subgroups} 
\author{Inna Capdeboscq} \address{Mathematics Institute, Zeeman Building, University of Warwick, Coventry
CV4 7AL, UK} \email{I.Korchagina@warwick.ac.uk}\thanks{The second author is supported in part by an Australian Postdoctoral Fellowship and ARC Grant No. DP110100440.}
\author{Anne Thomas}\address{School of Mathematics and Statistics, Carslaw Building F07, University of Sydney NSW 2006, Australia}\email{anne.thomas@sydney.edu.au}
\date{\today}
\begin{document}

\begin{abstract}  Let $G$ be a complete Kac--Moody group of rank $n \geq 2$ over the finite field of order $q$, with Weyl group $W$ and building $\Delta$.  We first show that if $W$ is right-angled, then for all $q \not \equiv 1 \pmod 4$ the group $G$ admits a cocompact lattice $\G$ which acts transitively on the chambers of $\Delta$.  We also obtain a cocompact lattice for $q \equiv 1 \pmod 4$ in the case that $\Delta$ is Bourdon's building.  As a corollary of our constructions, for certain right-angled~$W$ and certain $q$, the lattice $\G$ has a surface subgroup.  We also show that if~$W$ is a free product of spherical special subgroups, then for all $q$, the group $G$ admits a cocompact lattice $\G$ with $\G$ a finitely generated free group.  Our proofs use generalisations of our results in rank~$2$~\cite{CT} concerning the action of certain finite subgroups of $G$ on $\Delta$, together with covering theory for complexes of groups.  \end{abstract}

\maketitle

\section*{Introduction}

A complete Kac--Moody group $G$ is the completion, with respect to some topology, of a minimal Kac--Moody group over a finite field $\F_q$ of order $q$.  We use the completion in the ``building topology" (see~\cite[Section 1.2]{CR}).  Complete Kac--Moody groups are locally compact, totally disconnected topological groups, which act transitively on the chambers of their associated building $\Delta$.  For further background, see our earlier work~\cite{CT}, which considered complete Kac--Moody groups of rank $2$. 

For $q$ large enough, a complete Kac--Moody group $G$ over $\F_q$ admits non-cocompact lattices, as established by Carbone--Garland \cite{CG} and independently R\'emy \cite{R}.  In rank $n = 2$, where the building for $G$ is a tree, various constructions of cocompact lattices in $G$ are also known (see \cite{CT} and its references).  However if~$G$ is of rank $n \geq 3$ and its root system has an affine subsystem of type other than $\tilde{A}_d$, for $d \geq 1$, then $G$ does not admit any cocompact lattice (see \cite[Remark 4.4]{CM}).  Our main results, Theorem \ref{t:RA} and \ref{t:FP} below, give explicit constructions of cocompact lattices in certain complete Kac--Moody groups $G$ of rank $n \geq 2$ whose root systems have $\tilde{A}_1$ as their only type of affine root subsystem.   As we discuss further below the statements of these results, we believe that in many cases we have obtained the first constructions of cocompact lattices in $G$.

A complete Kac--Moody group $G$ is defined using a generalised Cartan matrix $A$.  Recall that $A = (a_{ij})$ is an $n \times n$ integer matrix, such that all diagonal entries $a_{ii}$ equal $2$, all off-diagonal entries $a_{ij}$ are $\leq 0$ and $a_{ij} = 0$ if and only if $a_{ji} = 0$.  
We denote by $W$ the Weyl group of $G$, generated by the set $S = \{ s_i \}_{i \in I}$ with $I = \{ 1,\ldots,n\}$.   Then $(W,I)$ is a Coxeter system  with $W$ having presentation 
$W = \la S \mid (s_is_j)^{m_{ij}}\ra$, 
where $m_{ii} = 1$ and for $i \neq j $ we have $m_{ij} = 2,3,4,6$ or $\infty$ as $a_{ij}a_{ji} = 0,1,2,3$ or $\geq 4$, respectively.  Recall that a \emph{special subgroup} of $W$ is a subgroup of the form $W_J:=\langle s_j \mid j \in J \rangle$ with $J \subset I$, and that if $W_J$ is a special subgroup then $(W_J,J)$ is a Coxeter system.  A special subgroup $W_J$ is \emph{spherical} if and only if it is finite.

For both of our theorems, we assume that if $a_{ij}a_{ji} \geq 4$ then $|a_{ij}|, |a_{ji}| \geq 2$.  This ensures that we can generalise results from \cite{CT} concerning the rank $1$ subgroups of $G$ and their action on the building for $G$.  We also assume without further remark that the Weyl group $W$ is infinite, as otherwise $G$ is compact.

In our first main result, Theorem \ref{t:RA} below, the assumptions on the generalised Cartan matrix $A$ imply that $m_{ij} \in \{ 2, \infty \}$ for $i \neq j$.  Hence the Weyl group $W$ of $G$ is a right-angled Coxeter group.    In part \eqref{i:p=2, q=3, RA} of Theorem \ref{t:RA}, this right-angled Coxeter group is arbitrary, while in part \eqref{i:q=1} we consider two special cases.  We note that in both \eqref{i:p=2, q=3, RA} and \eqref{i:q=1}, the rank $n$ of $G$ is unbounded.

\begin{theorem}\label{t:RA}  Let $G$ be a complete Kac--Moody group of rank $n \geq 2$ with generalised Cartan matrix $A = (a_{ij})$, defined over the finite field $\F_q$ where $q = p^h$ and $p$ is prime.
Assume that for all $i \neq j$, either $a_{ij}a_{ji} = 0$ or $a_{ij}a_{ji} \geq 4$, and that if $a_{ij}a_{ji} \geq 4$ then $|a_{ij}|, |a_{ji}| \geq 2$.  Let $W$ be the Weyl group for $G$.

\begin{enumerate}
\item\label{i:p=2, q=3, RA} If $p = 2$ or $q\equiv 3\pmod 4$, then $G$ admits a chamber-transitive cocompact lattice $\G$. 
\item\label{i:q=1} Suppose $q\equiv 1\pmod 4$. 
\begin{enumerate}
\item\label{i:free} If for all $i \neq j$ we have $|a_{ij}| \geq 2$, that is, if $W$ is isomorphic to the free product of $n$ copies of the cyclic group of order $2$, then $G$ admits a cocompact lattice $\G$ which has $2$ orbits of chambers and is transitive on each type of panel.
\item\label{i:Ipq} If $n \geq 5$ and 
\[W = W_n := \la s_1, \ldots, s_n \mid s_i^2 = 1, \quad (s_i s_{i+1})^2 = 1, \quad \forall i \in \Z/n\Z \ra,\]
then $G$ admits a cocompact lattice $\G$ which has $F$ orbits of chambers, for any positive integer $F$ which is a multiple of $8$.
\end{enumerate}\end{enumerate}
\end{theorem}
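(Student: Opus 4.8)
The plan is to realise each lattice $\Gamma$ as the fundamental group of a complex of \emph{finite} groups. The starting point is that the building $\Delta$ --- or rather its Davis realisation, which is $\CAT(0)$ since $W$ is right-angled --- is the development of the canonical complex of groups $G(K)$ over the Davis chamber $K$ of $(W,S)$, whose local group at the face indexed by a spherical (equivalently, since $W$ is right-angled, a clique) subset $J\subseteq S$ is the standard parabolic $P_J=\la B,s_j:j\in J\ra$ of $G$. A cocompact lattice $\Gamma\leq G$ with $F$ orbits of chambers will then correspond to a complex of finite groups over a finite scwol $\cY$ mapping onto $K$, with $F$ vertices of ``chamber type'', whose local groups $\Gamma_\sigma$ are subgroups of the corresponding parabolics, compatible under face inclusions, and such that at each $i$-panel $\Gamma_\sigma$ surjects onto a union of $\Gamma$-orbits accounting for all $q+1$ chambers. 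Once such data is produced, the covering theory for complexes of groups (Bridson--Haefliger III.$\cC$) shows that the complex of groups is developable with development $\Delta$, so $\Gamma:=\pi_1(\cY)$ acts on $\Delta$ cocompactly and properly discontinuously with finite stabilisers; and since every local group lies in $G$, so does $\Gamma$ --- it is a cocompact lattice in $G$, not merely in $\Aut\Delta$. So the first step is to make this dictionary precise, together with the combinatorial description of a right-angled building as a ``graph product'' of its rank-$1$ residues, which is what lets a suitable graph product of panel-stabilisers act simply transitively on chambers.

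The second step is the local group theory, generalising the rank-$2$ analysis of \cite{CT}. The hypothesis that $|a_{ij}|,|a_{ji}|\geq 2$ whenever $a_{ij}a_{ji}\geq 4$ guarantees that each $i$-panel is a copy of $\mathbb{P}^1(\F_q)$ and that the action of the rank-$1$ subgroup $G_i\leq G$ on it is, modulo its kernel, the action of $\SL_2(\F_q)$ on $\mathbb{P}^1(\F_q)$, which for $p$ odd factors through $\PSL_2(\F_q)=\SL_2(\F_q)/\la\check\alpha_i(-1)\ra$. One then needs to understand the finite subgroups of these groups acting on $\mathbb{P}^1(\F_q)$ simply transitively, or with two orbits, and their intersections with the torus $T$ of $G$ and with the central involutions $\check\alpha_i(-1)$. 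This is where the value of $q$ modulo $4$ enters: when $p=2$ the non-split torus $\Z/(q+1)\Z$ already acts simply transitively on $\mathbb{P}^1(\F_q)$; when $q\equiv 3\pmod 4$, so that $(q+1)/2$ is even, the normaliser of a non-split torus (dihedral of order $q+1$) acts simply transitively; but when $q\equiv 1\pmod 4$ there is no subgroup acting simply transitively, and the best one can do is two orbits, via the non-split torus $\Z/(q+1)\Z$ of $\SL_2(\F_q)$, whose image on the $i$-panel is $\Z/\tfrac{q+1}{2}\Z$ acting freely with two orbits and kernel $\la\check\alpha_i(-1)\ra$.

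For part \eqref{i:p=2, q=3, RA} I would take, for each $i$, a subgroup $H_i\leq G_i$ acting simply transitively on the $i$-panel (the non-split torus when $p=2$, the normaliser of a non-split torus when $q\equiv 3\pmod 4$), and set $\Gamma=\la H_1,\dots,H_n\ra$. Because $a_{ij}=0$ whenever $m_{ij}=2$, the subgroups $G_i$ and $G_j$ commute for adjacent $i,j$; hence $\Gamma$ is the right-angled graph product of the $H_i$ over the defining graph of $W$, and by the graph-product description of $\Delta$ it acts simply transitively on the chambers, so $\Gamma$ is a chamber-transitive cocompact lattice in $G$, proving \eqref{i:p=2, q=3, RA}. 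For part \eqref{i:free}, where $q\equiv 1\pmod 4$ and the defining graph has no edges, $\Delta$ is a tree; I would build a graph of finite groups over the complete bipartite graph $K_{2,n}$ --- two vertices of chamber type, $n$ of panel type $i$ carrying $H_i\cong\Z/(q+1)\Z$, and chamber-vertex groups generated by the appropriate central torus elements $\check\alpha_i(-1)$ --- whose fundamental group is a cocompact lattice with two orbits of chambers, transitive on each type of panel.

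Part \eqref{i:Ipq} is where I expect the main obstacle. Here $q\equiv 1\pmod 4$, $W=W_n$ with $n\geq 5$, the defining graph is the $n$-cycle, and $\Delta$ is the two-dimensional (Bourdon-type) building; now the pair-cliques $\{i,i+1\}$ genuinely constrain how the two-orbit panel subgroups fit together, because the chamber stabiliser they would impose varies around the cycle --- which is precisely why no construction for general right-angled $W$ with $q\equiv 1\pmod 4$ is available. To get around this I would assemble the two-orbit data into a complex of finite groups over a scwol that winds some number $F$ of times around the loop in $K$ dual to the $n$-cycle; the monodromy obstruction to closing this up consistently --- reconciling the two orbit-choices at successive panels together with the central involutions $\check\alpha_i(-1)$ all the way around --- is what forces $F$ to be a multiple of $8$, and conversely for every such $F$ the construction can be completed. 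The hard part is thus the $q\equiv 1\pmod 4$ patching/monodromy analysis: the local data is elementary, but tracking how the two-orbit panel subgroups and the torus $T$ glue globally, and pinning the modulus at exactly $8$ for $W=W_n$, will require a careful study of the $2$-local structure of the $G_i$ and of $T$, extending the rank-$2$ computations of \cite{CT}; a secondary, ever-present difficulty is ensuring that the finite group data is taken inside the parabolics of $G$ (so that $\Gamma\leq G$, not just $\Gamma\leq\Aut\Delta$), which is exactly the constraint responsible for the dichotomy in $q\bmod 4$.
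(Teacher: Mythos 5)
Your overall framework (coverings of complexes of groups over quotients of the Davis chamber, with local data built from finite subgroups of the rank-$1$ groups, and the trichotomy in $q \bmod 4$) is the paper's, and your parts \eqref{i:p=2, q=3, RA} for $p=2$ and \eqref{i:free} are essentially correct — for \eqref{i:free} your variant (full non-split torus $C_{q+1}$ with chamber groups generated by the $\check\alpha_i(-1)$, instead of the paper's $C_{(q+1)/2}$ with trivial chamber groups) is a workable alternative. But there are two genuine problems. First, in the case $q\equiv 3\pmod 4$ your claim that $\G=\la H_1,\dots,H_n\ra$ is the graph product of the $H_i$ acting simply transitively is wrong: the relevant ambient groups are $M_i\cong A_1(q)$ and $L_i=M_iT$ inside $G$, not $\PSL_2(q)$, and any transitive subgroup one can actually realise there meets $B$ in a nontrivial $2$-group. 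The paper takes $A_i=N_{M_i}(H_i)T_0$ with $T_0\in\Syl_2(T)$ chosen \emph{uniformly}, so that $A_i\cap A_j=T_0$ for all $i\neq j$; the chamber stabiliser is $T_0$, the complex of groups has local group $T_0$ at $\sigma_\emptyset$ (so $\G$ is an amalgam over $T_0$, not a plain graph product), and one must verify the index condition $|A_J:A_{J-\{j\}}|=q+1$ and the intersection condition $A_J\cap P_{J'}=A_{J'}$ for all spherical $J'\subsetneq J$ (Proposition \ref{p:covering RA}). Your sketch omits both the compatible choice of $T_0$ across all $i$ and these verifications, which are exactly what makes $\Phi_{\s/b}$ injective and hence the covering valid.

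Second, and more seriously, part \eqref{i:Ipq} is missing its actual construction, and your diagnosis of where the ``multiple of $8$'' comes from is incorrect. The quotient object is not a scwol ``winding $F$ times around a loop dual to the $n$-cycle'': since $X$ is Bourdon's building, the quotient must be a closed orientable surface $S_g$ tessellated by $F$ copies of the right-angled $n$-gon $K$ with type-preserving gluings, and the paper imports this tessellation from Futer--Thomas \cite{FT}. The value $F=8(g-1)/(n-4)$ is forced by Gauss--Bonnet, so ``$F$ a multiple of $8$'' is the condition under which a suitable genus $g$ exists and, crucially, under which the geodesics $h_1,\dots,h_N$ of the tessellation can be oriented with $\sum[h_k]=0\in H_1(S_g)$. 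That orientation is what resolves your ``monodromy'' problem: one sets $\phi(a)=1$ or $\phi(a)=g_i$ according to whether $i(a)$ lies to the left or right of the type-$i$ geodesic through $t(a)$, and coherence of the twisting elements around each vertex then follows from $[M_i,M_{i+1}]=1$, not from any $2$-local analysis of the $G_i$ or of $T$. So the obstruction is topological (Euler characteristic plus a homological condition on the tessellation), not group-theoretic, and without the surface tessellation and the geodesic-orientation device your part \eqref{i:Ipq} has no proof.
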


 It is notable that the infinite families of lattices in part \eqref{i:p=2, q=3, RA} of Theorem \ref{t:RA} are chamber-transitive, since for affine buildings of dimension $\geq 2$ there exist very few chamber-transitive lattices (see~\cite{KLT} and its references).  The lattices in part \eqref{i:p=2, q=3, RA} of Theorem \ref{t:RA} generalise many of the edge-transitive lattices in $G$ of rank~$2$ that we obtained in \cite[Theorem 1.1]{CT}.  When $q \equiv 1 \pmod 4$ we do not know whether there exists a cocompact lattice in all $G$ with right-angled Weyl group.  We discuss why these values of $q$ are more difficult in  Section \ref{s:discuss q=1}.

In part \eqref{i:Ipq} of Theorem \ref{t:RA} above, the Weyl group $W_n$ is the group generated by reflections in the sides of a right-angled hyperbolic $n$--gon.  As we explain in Section \ref{s:proof RA Ipq} below, the building $\Delta$ for $G$ may then be realised as the right-angled Fuchsian building known as Bourdon's building \cite{Bo}.  Thus our results show that if the building associated to $G$ is Bourdon's building, then for all $q$ the Kac--Moody group $G$ admits a cocompact lattice.  Lattices in the full automorphism group of Bourdon's building have been studied by several authors (see, for instance, \cite{FT} and its references).

We explain in Section \ref{s:surface} below that if the Weyl group $W$ is word-hyperbolic then any cocompact lattice $\G < G$ is word-hyperbolic.  An open question of Gromov asks whether every one-ended word-hyperbolic group contains a surface subgroup, that is, a subgroup isomorphic to the fundamental group of a compact orientable hyperbolic surface.  Using our constructions and results on graph products of groups, in Section \ref{s:surface} we provide an affirmative answer to Gromov's question for some Kac--Moody lattices, as follows. 

\begin{surface}\label{c:surface}
Let $G$ be as in Theorem \ref{t:RA}, with right-angled Weyl group $W$.  In the following cases, the cocompact lattice $\G < G$ constructed in Theorem \ref{t:RA} has a surface subgroup:
\begin{enumerate}
\item $p = 2$ and $W$ has a special subgroup isomorphic to $W_{n'}$ for some $n' \geq 5$; or 
\item $q \equiv 1 \pmod 4$ and $W = W_n$ for some $n \geq 5$.
\end{enumerate}
Moreover, if $W = W_n$ for $n \geq 6$ then every cocompact lattice in $G$ has a surface subgroup.
\end{surface}

Our second main result, Theorem \ref{t:FP} below, constructs cocompact lattices in all $G$ such that the associated Weyl group $W$ is a free product of spherical special subgroups.    Theorem~\ref{t:FP} provides an alternative construction of a cocompact lattice when $W$ is a free product of copies of $C_2$, as in \eqref{i:free} of Theorem \ref{t:RA} above, but also includes cases in which $m_{ij}$ may equal $3$, $4$ or $6$.  We note that unlike Theorem \ref{t:RA}, the statement of Theorem \ref{t:FP} does not depend upon the value of $q$.

\begin{theorem}\label{t:FP}  
Let $G$ be a complete Kac--Moody group of rank $n \geq 2$ with generalised Cartan matrix $A = (a_{ij})$, defined over the finite field $\F_q$ where $q = p^h$ and $p$ is prime.
Assume that for all $i \neq j$,  if $a_{ij}a_{ji} \geq 4$ then $|a_{ij}|, |a_{ji}| \geq 2$, and that the Weyl group $W$ of $G$ has a free product decomposition $$W = W_1 * W_2 * \cdots * W_N$$ where for $1 \leq k \leq N$, each $W_k$ is a spherical special subgroup.  
 Then for all $q$, the group $G$ admits a cocompact lattice $\G$, with $\G$ being a finitely generated free group.
\end{theorem}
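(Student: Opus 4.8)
The strategy is to realise $\Delta$ as a tree of finite spherical buildings, to read off the resulting amalgam decomposition of $G$, and then to produce $\G$ by covering theory for complexes of groups. Write $W_k = W_{J_k}$, where $J_1,\dots,J_N$ is a partition of $I$; let $B\le G$ be the Iwahori subgroup, i.e.\ the stabiliser in $G$ of a chamber of $\Delta$; and for each $k$ let $P_k\le G$ be the standard parahoric of type $J_k$, i.e.\ the stabiliser of a residue of type $J_k$. Since $W_k$ is spherical, each residue of type $J_k$ is a finite spherical building $\Delta_k$ of type $(W_k,J_k)$ over $\F_q$. Because $W$ is the free product of the $W_k$, the nerve of $W$ is disconnected and $\Delta$ is a tree of the buildings $\Delta_1,\dots,\Delta_N$: concretely, $G$ acts without inversions on the tree $T$ whose vertices are the chambers of $\Delta$ together with all residues of types $J_1,\dots,J_N$, with an edge joining each chamber to each residue containing it, and Bass--Serre theory then gives the amalgam decomposition $G\cong P_1*_B P_2*_B\cdots*_B P_N$ (the rank two case is \cite{CT}; in general this is a standard feature of buildings whose Weyl group splits as a free product). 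The hypothesis that $|a_{ij}|,|a_{ji}|\ge 2$ whenever $a_{ij}a_{ji}\ge 4$ enters here exactly as in \cite{CT}, to pin down the structure over $\F_q$ of the rank one subgroups of $G$, and hence of the parahorics $P_k$. As $P_k$ acts transitively on the chambers of $\Delta_k$ with chamber stabiliser $B$, the index $N_k:=[P_k:B]$ equals the number of chambers of $\Delta_k$, namely the value $\sum_{w\in W_k}q^{\ell(w)}$ of the Poincar\'e polynomial of $W_k$ at $q$; in particular $N_k$ is finite with $N_k\ge q+1\ge 2$, so $T$ is a locally finite tree whose chamber vertices have degree $N$ and whose type-$J_k$ vertices have degree $N_k$. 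The quotient $Y:=G\backslash\backslash T$ is the finite star-shaped complex of groups with a central vertex carrying $B$, one satellite vertex carrying $P_k$ for each $k$, and $N$ edges each carrying $B$.

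The second step is to construct a finite covering of $Y$. Build a finite connected graph $Z$, bipartite over $\{\ast\}\sqcup\{v_1,\dots,v_N\}$ and with its edges coloured by $\{1,\dots,N\}$, such that every $\ast$-vertex has degree $N$ with exactly one incident edge of each colour, every $v_k$-vertex has degree $N_k$ with all incident edges of colour $k$, and every colour-$k$ edge joins a $\ast$-vertex to a $v_k$-vertex; such a $Z$ is easy to produce and make connected, for instance by taking the number of $\ast$-vertices to be a common multiple of $N_1,\dots,N_N$. Equip $Z$ with the complex of groups all of whose local groups are trivial. Then the evident morphism onto $Y$, sending $\ast$ to the central vertex, each $v_k$ to the $k$-th satellite and each colour-$k$ edge to the $k$-th edge of $Y$, is a covering of complexes of groups: the covering condition at a $v_k$-vertex says precisely that it carry $[P_k:B]=N_k$ edges over the $k$-th edge of $Y$, and at a $\ast$-vertex that it carry exactly one edge over each of the $N$ edges of $Y$, which is exactly how $Z$ was built. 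By the covering theory for complexes of groups --- in this one-dimensional case Bass's theory of graphs of groups, see \cite{CT} and its references --- the complex of groups on $Z$ is developable with development $T$, and $\G:=\pi_1(Z)$ embeds as a subgroup of $\pi_1(Y)\cong G$ acting on $T$ with quotient $Z$. Since the local groups of $Z$ are trivial, $\G$ is the fundamental group of the finite graph $Z$, hence a finitely generated free group, and it acts freely on $T$, so $\G\cap gBg^{-1}=1$ for every $g\in G$; thus $\G$ is discrete in $G$, and since $Z=\G\backslash T$ is finite, $\G$ is cocompact. Hence $\G$ is a cocompact lattice in $G$ which is a finitely generated free group; and since every vertex of the finite connected graph $Z$ has degree at least $2$ it is not a tree, so $\G$ is nontrivial, and by enlarging $Z$ one may take $\G$ free of arbitrarily large rank.

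The delicate step, and the one using the hypotheses, is the first: showing cleanly that $\Delta$ is a tree of finite spherical buildings and that $G\cong P_1*_B\cdots*_B P_N$, and in particular identifying the parahorics $P_k$ --- via the rank one analysis generalising \cite{CT} --- and computing the indices $N_k$. Granting this structural input, the rest is in essence the elementary fact that an amalgam of profinite groups over a common compact open subgroup of finite index admits cocompact free lattices, made explicit here by a covering; the remaining points --- connectedness of $Z$, verification of the covering conditions, and discreteness and cocompactness of $\G$ as a subgroup of the topological group $G$ --- are routine.
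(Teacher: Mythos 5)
Your proposal is correct, and it reaches the same combinatorial heart as the paper's proof --- a finite object in which each chamber lies in exactly one residue of each type $J_k$ and each type-$J_k$ residue is ``complete'', carrying the trivial complex of groups and covering the quotient by $G$ --- but you execute it in a genuinely different formalism. The paper never collapses to a tree: it builds a finite simplicial complex $Y$ from $M_k = M/m_k$ copies of each residue neighbourhood $Z_k$ (with $M=\prod_k m_k$, $m_k=|P_{J_k}:B|$) glued along type-$\emptyset$ vertices, and verifies directly that the trivial complex of groups over the associated scwol covers the full Davis complex of groups $G(\cK)$; this forces it to check the coset-bijection condition at vertices of every intermediate type $J''\subsetneq J'\subsetneq J_k$, which is where its transversal lemma (Lemma \ref{l:transversals}) is needed. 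You instead contract each $Z_k$ to a point, observe that the Davis realisation deformation retracts $G$-equivariantly onto the chamber/residue incidence tree $T$, read off $G\cong P_{J_1}*_B\cdots*_B P_{J_N}$ by Bass--Serre theory, and then produce $\G$ as the fundamental group of a finite coloured bipartite graph covering the quotient star. Your route buys a cleaner one-dimensional argument (no intermediate types to check) at the cost of having to justify the structural input --- that $T$ is a tree and that the amalgam decomposition holds --- which you assert as standard; it is (contractibility of the Davis realisation plus the nerve of the covering by the contractible $Z_k$'s gives it), but it is exactly the step the paper's more pedestrian scwol computation avoids having to invoke. Two small points to tighten: at a $v_k$-vertex the covering condition is not merely that there are $N_k$ incident edges but that the elements $\phi(a)\in P_{J_k}$ you attach to them form a transversal of $P_{J_k}/B$ --- always arrangeable since $|P_{J_k}:B|=N_k$, but it should be said; and the hypothesis $|a_{ij}|,|a_{ji}|\geq 2$ is not actually what makes the indices $N_k$ finite (that is just sphericity of $J_k$, as in Lemma \ref{l:indexes finite}), so your attribution of its role is slightly off, though harmless here.
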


\noindent We provide a more explicit description of the action of $\G$ as in Theorem \ref{t:FP} in Section \ref{s:proof FP} below. 

In rank $n \geq 3$, the only previous constructions of cocompact lattices in non-affine complete Kac--Moody groups $G$ that are known to us are as follows.  

\begin{itemize} 
\item R\'emy--Ronan \cite[Section 4.B]{RR} constructed cocompact chamber-transitive lattices in certain groups defined using a twin root datum and having arbitrary right-angled Weyl group.  In their construction, the finite ground fields were ``mixed" (that is, of distinct characteristics). It seems that their construction might work for Kac--Moody groups $G$ with right-angled Weyl group when $G$ is defined over $\F_{q}$ and $q = 2^h$. Perhaps under some additional assumptions on $G$, their construction can also be carried out  when $q$ is odd.  (Using the notation introduced in Section~\ref{s:finite} below, these additional assumptions should include $Z(M_i)\leq Z(G)$ and $L_i/Z(L_i)\cong \PGL_2(q)$ for all $1 \leq i \leq n$, and $[L_i/Z(L_i), L_j/Z(L_j)]=1$ for all $i\neq j$ such that $m_{ij}=2$.)  Under such assumptions, some of the lattices we obtain in Theorem \ref{t:RA}\eqref{i:p=2, q=3, RA} above might then be equivalent to the lattices that would be obtained via the construction of \cite[Section 4.B]{RR}. 

\item Carbone--Cobbs \cite[Lemma 21]{CC} constructed a cocompact chamber-transitive lattice in $G$ in the special case that $n = 3$, $p = q = 2$ and $|a_{ij}| \geq 2$ for $i \neq j$.  Their lattice is the same as the lattice $\G$ in this case of Theorem \ref{t:RA}\eqref{i:p=2, q=3, RA} above.\footnote{The claim in the statement of \cite[Lemma 21]{CC} that the embedding of $\G$ into $G$ is nondiscrete is incorrect, as we now explain.  The building for $G$ in this case may be realised as a hyperbolic building with ideal vertices.  The $G$--stabilisers of the ideal vertices are noncompact and so may contain infinite discrete subgroups, which invalidates the reason given for nondiscreteness in the proof of \cite[Lemma 21]{CC}.  In fact in \cite[Section 7]{CC}, the abstract group $\G$ is correctly embedded in $G$ as a cocompact lattice, using a construction which is equivalent to that in our proof of Theorem \ref{t:RA}\eqref{i:p=2, q=3, RA} in this case.}

\item Gramlich--Horn--M\"uhlherr \cite[Section 7.3]{GHM} showed that for certain complete Kac--Moody groups~$G$, the fixed point set $G_\theta$ of a quasi-flip $\theta$ is a lattice in $G$.  The lattice $G_\theta$ is sometimes cocompact and sometimes non-cocompact.  A fundamental domain for $G_\theta$ is not known.
\end{itemize}
Thus for many $G$ of rank $n \geq 3$, our results provide the first (explicit) constructions of cocompact lattices in $G$.  In particular, Theorem \ref{t:FP}  gives the first explicit examples of cocompact lattices in complete Kac--Moody groups whose Weyl group is not right-angled.

To prove Theorems \ref{t:RA} and \ref{t:FP} above, we consider the action of $G$ on the Davis geometric realisation $X$ of the building $\Delta$.  We recall the construction of $X$ in Section \ref{s:Davis} below.  The geometric realisation $X$ is a  simplicial complex on which $G$ acts cocompactly with compact vertex stabilisers.  These properties of the $G$--action allow us to employ the following characterisation of cocompact lattices in $G$: a subgroup $\Gamma \leq G$ is a cocompact lattice if and only if $\Gamma$ acts on $X$ cocompactly with finite vertex stabilisers (see \cite{BL}).  

In Section \ref{s:finite} below, we gather results concerning certain finite subgroups of $G$ and their actions on $X$, which we generalise from \cite{CT}.  In the proof of Theorem \ref{t:RA} in Section \ref{s:proof RA} below, these finite subgroups will appear as the $\G$--stabilisers of the vertices of $X$.   In the proof of Theorem \ref{t:FP}, in Section~\ref{s:proof FP}, the lattices $\G$ that we construct have trivial vertex stabilisers.

Our proofs also use covering theory for complexes of groups.  This highly technical theory was developed by Bridson--Haefliger \cite[Chapter III.$\cC$]{BH} (see also Lim--Thomas \cite{LT} and Section \ref{s:complexes of groups} below).  We construct each cocompact lattice $\G < G$ as the fundamental group of a finite complex of finite groups, denoted $\G(\cY)$, such that there is a covering of complexes of groups from $\G(\cY)$ to the canonical complex of groups $G(\cK)$ induced by the action of $G$ on $X$.  It then follows from covering theory that $\G$ embeds in $G$ as a cocompact lattice.  We also obtain from the theory of complexes of groups a description of the action of $\G$ on~$X$, and that $\G$ as in Theorem \ref{t:FP} is a finitely generated free group.  In many of our proofs, the construction of the complex of groups $\G(\cY)$ together with a covering of complexes of groups $\G(\cY) \to G(\cK)$ is quite delicate.  This is not surprising, since the Kac--Moody group $G$ is ``small" inside the full automorphism group $\Aut(X)$, and so care is required in order to show that the fundamental group of $\G(\cY)$ actually embeds in $G$ as a cocompact lattice.

 \subsection*{Acknowledgements} We thank the University of Warwick for travel support.

%%%%%%%%%%%%%%%%%%%%%%%%%%%%%%%%%%%%%%%%%%
\section{Preliminaries}\label{s:preliminaries}
%%%%%%%%%%%%%%%%%%%%%%%%%%%%%%%%%%%%%%%%%%

Throughout this section, $G$ is a complete Kac--Moody group of rank $n \geq 2$ with generalised Cartan matrix $A = (a_{ij})$, defined over the finite field $\F_q$ where $q = p^h$ and $p$ is prime, such that for all $i \neq j$, if $a_{ij}a_{ji} \geq 4$ then $|a_{ij}|, |a_{ji}| \geq 2$.  Let $W$ be the Weyl group for $G$.  In Section \ref{s:Davis}, we recall the construction of the Davis geometric realisation $X$ of the building $\Delta$ for $G$, and describe the action of $G$ on $X$.     Section \ref{s:complexes of groups} then sketches the theory of complexes of groups and their coverings.  

%------------------------------------------------------------------
\subsection{The Davis realisation}\label{s:Davis}
%------------------------------------------------------------------

We will follow Davis \cite{D}, and assume the basic theory of buildings.  In the proofs of our main results in Sections \ref{s:proof RA} and \ref{s:proof FP} below, we will give more explicit descriptions of the Davis realisation $X$ where possible.
   
Denote by~$\cS$ the set of spherical subsets of $I$, that is, the subsets $J \subseteq I$ such that the special subgroup $W_J := \la s_j \mid j \in J \ra$ is finite.  By convention, $\emptyset \in \cS$ with $W_\emptyset$ the trivial group.   

Let $L$ be the \emph{nerve} of $W$.  That is, $L$ is the simplicial complex with vertex set $S$, so that for each $J \subseteq I$, the vertices $\{ s_j \}_{j \in J}$ span a simplex in $L$ if and only if $J \in \cS$.  Let $L'$ be the barycentric subdivision of $L$.  For each nonempty $J \in \cS$, denote by $\sigma_J$ the barycentre of the simplex of $L$ corresponding to $J$.  Then the set of vertices of $L'$ is in bijection with the set of nonempty $J \in \cS$.  To simplify notation, we write $\sigma_i$ for the vertex $\sigma_{\{i\}} = s_i$.

The \emph{chamber} $K$ is the cone on the barycentric subdivision $L'$.  For each $i \in I$, the chamber $K$ has \emph{mirror} $K_i$ which is, by definition, the closure of the star of the vertex $\sigma_i$ in $L'$.  Denote by $\sigma_\emptyset$ the cone point of $K$.  Then the set of vertices of $K$ is in bijection with the set of all spherical subsets $\cS$.  We will say that the vertex $\sigma_J$ has \emph{type} $J$.

For each $x \in K$, define
\[S(x) := \{ s_i \in S \mid x \in K_i \}. \]
Now let $\cC$ be the set of chambers of the building $\Delta$ and let $\delta: \cC \times \cC \to W$ be the $W$--valued distance function.  The Davis realisation $X$ is given by 
\[X = \cC \times K / \sim\]
where $(c,x) \sim (c',x')$ if and only if $x = x'$ and $\delta(c,c') \in W_{S(x)}$.  We identify $K$ with the subcomplex $(c_0, K)$ of $X$, where $c_0$ is the standard chamber of $\Delta$.  Each image of the mirror $K_i \subset K$ in $X$ is called a \emph{panel (of type $i$)} in $X$.  The vertices of $X$ naturally inherit types $J \in \cS$ from the vertices of $K$.

For each $i \in I$ we define $P_i := B \sqcup Bs_iB$.  For each $J \in \cS$, put $P_J := \sqcup_{w \in W_J} BwB$, so that in particular $P_\emptyset = B$.  Note that $P_i = P_{\{ i \}}$ for each $i \in I$.  The action of $G$ on $\Delta$ then induces a type-preserving action of $G$ on $X$, with quotient the chamber $K$, such that for each $J \in \cS$, the stabiliser of the vertex $\sigma_J \in K$ is $P_J$.  The kernel of this $G$--action is the finite group $Z(G)$, the centre of $G$ \cite{CR}.

%---------------------------------------------------------------------------------------
\subsection{Complexes of groups}\label{s:complexes of groups}
%---------------------------------------------------------------------------------------

In this section we recall the theory of complexes of groups and their coverings that we will need, following \cite[Chapter III.$\cC$]{BH}.  The main result is Corollary \ref{c:coverings}, which gives a sufficient condition for the fundamental group of a complex of groups to embed as a cocompact lattice in $G$.

We will mostly construct complexes of groups over \emph{scwols}, that is, small categories without loops.  The exception is in Section \ref{s:proof RA Ipq}, where we use constructions of complexes of groups over certain polygonal complexes instead.  

\begin{defn}\label{d:scwol} A \emph{scwol} $\cY$ is the disjoint union of a set $V(\cY)$ of vertices and a set $E(\cY)$ of edges, with each edge $a$ oriented from its initial vertex $i(a)$ to its terminal vertex $t(a)$, such that $i(a) \not = t(a)$.  A pair of
edges $(a,b)$ is \emph{composable} if $i(a)=t(b)$, in which case there is a third edge $ab$, called
the \emph{composition} of $a$ and $b$, such that $i(ab)=i(b)$ and $t(ab)=t(a)$, and if $(a,b)$ and $(b,c)$ are composable then $(ab)c = a(bc)$ (associativity).  \end{defn}

\begin{defn}\label{d:morphism_scwols}  Let $\cY$ and $\cZ$ be scwols.  A \emph{nondegenerate morphism} $f:\cY \to \cZ$ is a map that sends $V(\cY)$ to $V(\cZ)$ and $E(\cY)$ to $E(\cZ)$, such that:
\begin{enumerate}
\item for each $a \in E(\cY)$, we have $i(f(a)) = f(i(a))$ and $t(f(a)) = f(t(a))$;
\item for each pair of composable edges $(a,b)$ in $\cY$, we have $f(ab) = f(a)f(b)$; and
\item for each vertex $\s \in V(\cY)$, the restriction of $f$ to the set of edges with initial vertex $\s$ is a bijection onto the set of edges of $\cZ$ with initial vertex $f(\s)$.
\end{enumerate}
\end{defn}

A \emph{morphism of scwols} $f:\cY \to \cZ$ is a functor from the category $\cY$ to the category $\cZ$ (see \cite[Section~III.$\mathcal{C}$.A.1]{BH}). An \emph{automorphism} of a scwol $\cX$ is a morphism from $\cX$ to $\cX$ that has an inverse.  See \cite[Chapter III.$\mathcal{C}$]{BH} for the definition of the \emph{fundamental group} of a scwol and of a \emph{group action} on a scwol.

Suppose now that $X$ is the Davis realisation of the building $\Delta$ for the Kac--Moody group $G$, as defined in Section~\ref{s:Davis} above.  Recall that each vertex $\sigma \in V(X)$ has type $J$ a spherical subset of $I$.  The edges $E(X)$ are then naturally oriented by inclusion of type.  That is, if an edge $a$ joins a vertex $\sigma'$ of type $J'$ to
a vertex $\sigma$ of type $J$, then $i(a)=\sigma'$ and $t(a)=\sigma$ exactly when $J'
\subsetneq J$.  It is clear that the sets $V(X)$ and $E(X)$ satisfy the properties of a scwol.  Moreover, if $Y$ is a subcomplex of $X$, then the sets $V(Y)$ and $E(Y)$ also satisfy Definition~\ref{d:scwol} above.  In particular, the set of vertices and edges of the standard chamber $K \subset X$ satisfy this definition.  We will denote the scwol associated to $X$ by $\cX$ and that associated to $K$ by $\cK$.  The Kac--Moody group $G$ then acts on the scwol $\cX$ with quotient $\cK$.

\begin{defn}
A \emph{complex of groups} $G(\cY)=(G_\sigma, \psi_a, g_{a,b})$ over a scwol
$\cY$ is given by: \begin{enumerate} \item a group $G_\sigma$ for each
$\sigma \in V(\cY)$, called the \emph{local group} at $\sigma$;
\item a monomorphism $\psi_a: G_{i(a)}\rightarrow G_{t(a)}$ along the edge $a$ for each
$a \in E(\cY)$; and
\item for each pair of composable edges, a twisting element $g_{a,b} \in
G_{t(a)}$, such that \[ \ad(g_{a,b})\circ\psi_{ab} = \psi_a
\circ\psi_b
\] where $\ad(g_{a,b})$ is conjugation by $g_{a,b}$ in $G_{t(a)}$,
and for each triple of composable edges $a,b,c$ the following
\emph{cocycle condition} holds
\[\psi_a(g_{b,c})\,g_{a,bc} = g_{a,b}\,g_{ab,c}.\] \end{enumerate}
\end{defn}

\noindent A complex of groups is \emph{simple} if each $g_{a,b}$ is trivial, and is \emph{trivial} if each local group is trivial.  (A trivial complex of groups must be simple.) 

For example, we construct a simple complex of groups $G(\cK)$ over $\cK$ as follows.  Let $\s \in V(\cK)$.  Then $\s = \sigma_J$ for a unique spherical subset $J \subseteq I$, and we put $G_\s = P_J$.  All monomorphisms along edges of $\cK$ are the natural inclusions, and all $g_{a,b}$ are trivial.  

This complex of groups $G(\cK)$ is canonically induced by the action of $G$ on $\cX$.  For the general construction of a complex of groups induced by a group action on a scwol, see \cite[Section 2.9]{BH}.  A complex of groups is \emph{developable} if it is isomorphic to a complex of groups induced by a group action on a scwol.  Complexes of groups, unlike graphs of groups, are not in general developable.  

We refer the reader to \cite{BH} for the definition of the \emph{fundamental group} $\pi_1(G(\cY))$ and \emph{universal cover} of a (developable) complex of groups $G(\cY)$. The universal cover is a connected, simply-connected scwol, equipped with an action of $\pi_1(G(\cY))$, so that the complex of groups induced by the action of the fundamental group on the universal cover is isomorphic to $G(\cY)$.    For example:
\begin{enumerate}
\item The complex of groups 
 $G(\cK)$ has fundamental group $G$ and universal cover $\cX$.   
 \item If $G(\cY)$ is a trivial complex of groups, then $\pi_1(G(\cY))$ is the fundamental group of the underlying scwol $\cY$.
\end{enumerate}

We finally define morphisms and coverings of complexes of groups.  In the following definitions, $\cY$ and $\cZ$ are scwols, and $G(\cY)=(G_\s,\psi_a, g_{a,b})$ and $H(\cZ)=(H_\tau,\theta_a, h_{a,b})$ are complexes of groups over $\cY$ and $\cZ$ respectively.

\begin{defn}\label{d:morphism} Let $f: \cY\to \cZ$ be a (possibly degenerate) morphism of scwols.  A \emph{morphism} $\Phi: G(\cY) \to H(\cZ)$ over $f$ consists of: \begin{enumerate}
\item a homomorphism $\phi_\sigma: G_\sigma \to H_{f(\sigma)}$ for each $\sigma \in V(\cY)$, called
the \emph{local map} at $\s$; and
\item\label{i:commuting} an element $\phi(a) \in H_{t(f(a))}$ for each $a \in E(Y)$, such that the following diagram commutes
\[\xymatrix{
G_{i(a)}   \ar[d]^-{\phi_{i(a)}} \ar[rrr]^{\psi_a} & & & G_{t(a)} \ar[d]^-{\phi_{t(a)}}
\\
H_{f(i(a))}  \ar[rrr]^{\ad(\phi(a))\circ \theta_{f(a)}} & & & H_{f(t(a))}
}\]
and for all pairs of
composable edges $(a,b)$ in $E(\cY)$, \[ \phi_{t(a)}(g_{a,b})\phi(ab) = \phi(a) \,\theta_{f(a)}(\phi(b))h_{f(a),f(b)}. \]
\end{enumerate} \end{defn}

\noindent A morphism is \emph{simple} if each element $\phi(a)$ is trivial.  

\begin{defn}\label{d:covering} A morphism $\Phi:G(\cY) \to H(\cZ)$ over a nondegenerate morphism of
scwols $f:\cY\to \cZ$ is a
\emph{covering of complexes of groups} if further: \begin{enumerate}\item each $\phi_\sigma$ is
injective; and \item \label{i:covbijection} for each $\sigma \in V(\cY)$ and $b \in E(\cZ)$ such that
$t(b) = f(\sigma)$, the map on cosets \[  \Phi_{\s/b}:\left(\coprod_{\substack{a \in f^{-1}(b)\\ t(a)=\sigma}} G_\sigma /
\psi_a(G_{i(a)})\right) \to H_{f(\sigma)} / \theta_b(H_{i(b)})\] induced by $g \mapsto
\phi_\sigma(g)\phi(a)$ is a bijection.\end{enumerate}\end{defn}

We will use the following general result on functoriality of coverings (which is implicit in~\cite{BH}, and stated and proved explicitly in~\cite{LT}).

\begin{theorem}\label{t:coverings} Let $G(\cY)$ and $H(\cZ)$ be complexes of groups over scwols $\cY$ and
$\cZ$ respectively.  Suppose there is a covering of complexes of groups $\Phi:G(\cY) \to H(\cZ)$.  Then $G(\cY)$ is
developable if $H(\cZ)$ is developable, and $\Phi$ induces a monomorphism of fundamental groups $\pi_1(G(\cY)) \hookrightarrow \pi_1(H(\cZ))$
and an equivariant isomorphism of universal covers $\widetilde{G(\cY)} \longrightarrow
\widetilde{H(\cZ)}$. \end{theorem}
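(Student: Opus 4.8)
The plan is to follow the strategy of Bridson--Haefliger \cite[Chapter III.$\cC$]{BH}, made explicit in Lim--Thomas \cite{LT}, proving the three assertions in the order: developability of $G(\cY)$; construction and analysis of a map $\widetilde{G(\cY)} \to \widetilde{H(\cZ)}$; and injectivity of the induced homomorphism on fundamental groups. Write $\G = \pi_1(G(\cY))$ and $\Lambda = \pi_1(H(\cZ))$. First I would recall that a morphism $\Phi \colon G(\cY) \to H(\cZ)$ over a morphism of scwols $f$ induces a homomorphism $\Phi_* \colon \G \to \Lambda$, and that this is natural: after fixing compatible basepoints, for each $\s \in V(\cY)$ the canonical homomorphism $G_\s \to \G$ composed with $\Phi_*$ agrees up to conjugacy in $\Lambda$ with the composite of the local map $\phi_\s \colon G_\s \to H_{f(\s)}$ and the canonical homomorphism $H_{f(\s)} \to \Lambda$.

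Step one is developability. I would invoke a standard characterisation of developability (see \cite[III.$\cC$]{BH}): a complex of groups over a scwol is developable if each of its canonical local-to-fundamental-group homomorphisms is injective. Since $H(\cZ)$ is developable, every $H_{f(\s)} \to \Lambda$ is injective; since $\Phi$ is a covering, every local map $\phi_\s$ is injective by Definition \ref{d:covering}(1). By the naturality above, the composite $G_\s \to \G \xrightarrow{\Phi_*} \Lambda$ is injective up to conjugacy, hence injective, so $G_\s \to \G$ is injective for every $\s$, and $G(\cY)$ is developable. Consequently both universal covers exist: $\widetilde{G(\cY)}$ and $\widetilde{H(\cZ)}$ are connected, simply connected scwols on which $\G$ and $\Lambda$ act with quotients $\cY$ and $\cZ$, inducing $G(\cY)$ and $H(\cZ)$ respectively.

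Next I would build the comparison map. Fix $\s_0 \in V(\cY)$, let $\tau_0 = f(\s_0)$, and use the description of $\widetilde{G(\cY)}$ via homotopy classes of $G(\cY)$--paths issuing from $\s_0$. Define $\tilde F \colon \widetilde{G(\cY)} \to \widetilde{H(\cZ)}$ by applying $f$ to the underlying edge-path of a $G(\cY)$--path and applying the data $(\phi_\s, \phi(a))$ of $\Phi$ to the group elements labelling it; the compatibility identities in Definition \ref{d:morphism} are exactly what is needed to see that homotopic $G(\cY)$--paths have homotopic images, so $\tilde F$ is well defined, is $\Phi_*$--equivariant, and sends the base vertex $\tilde\s_0$ to $\tilde\tau_0 := \tilde F(\tilde\s_0)$. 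The crux — and the step I expect to be the main obstacle — is to show $\tilde F$ is an isomorphism of scwols. The point is that the coset-bijection axiom Definition \ref{d:covering}(2), read through the two universal-cover models, says precisely that $\tilde F$ restricts to a bijection from the edges of $\widetilde{G(\cY)}$ terminating at any given vertex onto the edges of $\widetilde{H(\cZ)}$ terminating at its image; combined with the bijectivity on edges issuing from each vertex, which comes from nondegeneracy of $f$ (Definition \ref{d:morphism_scwols}(3)) lifted to the covers, this makes $\tilde F$ a local isomorphism of scwols. A local isomorphism of scwols with connected source and simply connected target is an isomorphism: passing to geometric realisations, $|\tilde F|$ is a covering of a simply connected space with connected total space, hence a homeomorphism, and being combinatorial it forces $\tilde F$ itself to be an isomorphism. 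Carefully matching the fibres of $f$ over an edge with the relevant cosets, and making this last ``local isomorphism plus simply connected target implies isomorphism'' step precise in the scwol setting, is where the real work lies.

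Finally, injectivity of $\Phi_*$. Let $\gamma \in \ker \Phi_*$. By equivariance, $\tilde F(\gamma \cdot \tilde\s_0) = \Phi_*(\gamma) \cdot \tilde\tau_0 = \tilde\tau_0 = \tilde F(\tilde\s_0)$, so since $\tilde F$ is injective, $\gamma \cdot \tilde\s_0 = \tilde\s_0$; thus $\gamma$ lies in $\Stab_\G(\tilde\s_0)$, which by construction of the universal cover is the image of the injective homomorphism $G_{\s_0} \to \G$. Write $\gamma$ as the image of $g \in G_{\s_0}$. Applying $\Phi_*$ and using naturality, the image of $\phi_{\s_0}(g)$ under the injective homomorphism $H_{\tau_0} \to \Lambda$ is trivial, whence $\phi_{\s_0}(g) = 1$ and, since $\phi_{\s_0}$ is injective, $g = 1$ and $\gamma = 1$. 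Therefore $\Phi_* \colon \G \hookrightarrow \Lambda$ is a monomorphism, and the isomorphism produced in the previous step is the required $\Phi_*$--equivariant isomorphism $\widetilde{G(\cY)} \to \widetilde{H(\cZ)}$ of universal covers.
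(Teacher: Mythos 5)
The paper does not actually prove Theorem \ref{t:coverings}; it is imported verbatim from Lim--Thomas \cite{LT} (and credited as implicit in \cite[Chapter III.$\cC$]{BH}), so there is no in-paper argument to compare against. Your outline is a correct reconstruction of the standard proof from those sources and in the right order: developability follows because composing $\Phi$ with the canonical morphism $H(\cZ)\to\pi_1(H(\cZ))$ gives a morphism of $G(\cY)$ to a group that is injective on local groups; the induced map on developments is a local isomorphism because Definition \ref{d:covering}(2) controls the edges terminating at a vertex while nondegeneracy of $f$ controls those issuing from it, and a combinatorial covering of a connected, simply connected scwol is an isomorphism; and injectivity of $\Phi_*$ then falls out of the stabiliser description of the base vertex. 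You have also correctly identified where the genuine labour sits (well-definedness of the map on homotopy classes of $G(\cY)$--paths and the ``local isomorphism onto a simply connected target'' step), so I see no gap in the approach, only details deferred to \cite{LT}.
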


\begin{corollary}\label{c:coverings}  Let $G(\cK)$ be the canonical complex of groups induced by the action of $G$ on $\cX$.  Suppose $\G(\cY)$ is a complex of groups over a finite scwol $\cY$ such that each local group in $\G(\cY)$ is finite.  Let $\G$ be the fundamental group of $\G(\cY)$.  If there is a covering $\Phi:\G(\cY) \to G(\cK)$, then $\G$ embeds as a cocompact lattice in $G$ so that $\G \bs \cX$ is isomorphic to $\cY$. 
\end{corollary}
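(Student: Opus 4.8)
The plan is to deduce Corollary~\ref{c:coverings} almost entirely from Theorem~\ref{t:coverings} together with the Bass--Lubotzky characterisation of cocompact lattices recalled in Section~\ref{s:Davis}. First I would invoke the fact, noted in Section~\ref{s:complexes of groups}, that the canonical complex of groups $G(\cK)$ is developable, with fundamental group $G$ and universal cover the scwol $\cX$ associated to the Davis realisation $X$. Given the hypothesised covering $\Phi : \G(\cY) \to G(\cK)$, Theorem~\ref{t:coverings} immediately yields three things: $\G(\cY)$ is developable; the induced homomorphism $\pi_1(\G(\cY)) \hookrightarrow \pi_1(G(\cK))$ is a monomorphism, i.e. $\G = \pi_1(\G(\cY))$ embeds in $G$; and there is a $\G$--equivariant isomorphism of universal covers $\widetilde{\G(\cY)} \to \widetilde{G(\cK)} = \cX$. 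Under this isomorphism the $\G$--action on $\widetilde{\G(\cY)}$ is identified with a $\G$--action on $\cX$ which, by construction of the embedding $\G \hookrightarrow G$, is the restriction to $\G$ of the $G$--action on $\cX$. Since the complex of groups induced by the $\G$--action on its universal cover is $\G(\cY)$ itself, the quotient $\G \bs \cX$ is isomorphic to $\cY$.

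Next I would check the lattice properties using this identification. Because $\cY$ is a finite scwol, $\G \bs \cX \cong \cY$ is finite, so $\G$ acts cocompactly on $\cX$, hence on $X$. The vertex stabilisers of the $\G$--action on $\cX$ are, up to conjugacy, precisely the local groups $\G_\sigma$ of the induced complex of groups $\G(\cY)$ (this is part of what it means for the induced complex of groups to be $\G(\cY)$); by hypothesis each local group is finite, so $\G$ has finite vertex stabilisers. Recall from Section~\ref{s:Davis} that $G$ acts on $X$ cocompactly with compact (open) vertex stabilisers, and that by~\cite{BL} a subgroup $\G \leq G$ is a cocompact lattice in $G$ if and only if $\G$ acts on $X$ cocompactly with finite vertex stabilisers. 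Both conditions now hold, so $\G$ is a cocompact lattice in $G$, and we have already shown $\G \bs \cX \cong \cY$.

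One small technical point to address is the relationship between the abstract scwol $\cX$ and the simplicial complex $X$: the $\G$--action on $\cX$ coming from Theorem~\ref{t:coverings} is a priori an action on the scwol, but since $\cX$ is by construction the scwol of the simplicial complex $X$ and the $G$--action (hence the $\G$--action) is type-preserving and simplicial, cocompactness and finiteness of stabilisers for the scwol action are equivalent to the same statements for the action on $X$. I would remark on this in a sentence rather than belabour it. The only genuine subtlety — and the step I expect to be the main obstacle in applications, though not in the proof of the corollary itself — is that the hypotheses of the corollary must actually be verified in each concrete case, namely exhibiting a finite complex of finite groups $\G(\cY)$ together with a covering $\Phi : \G(\cY) \to G(\cK)$; the corollary itself is a formal consequence of Theorem~\ref{t:coverings} and~\cite{BL}, so its proof is short.
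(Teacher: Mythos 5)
Your proposal is correct and follows exactly the paper's route: the paper's own proof is a one-line appeal to Theorem~\ref{t:coverings} together with the Bass--Lubotzky characterisation of cocompact lattices stated in the introduction, and your argument simply spells out those same two ingredients in detail. Nothing is missing and nothing differs in substance.
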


\begin{proof} This follows from the characterisation of cocompact lattices in $G$ in the introduction, together with Theorem \ref{t:coverings} above. \end{proof}

%%%%%%%%%%%%%%%%%%%%%%%%
\section{Finite subgroups and finite indexes}\label{s:finite}
%%%%%%%%%%%%%%%%%%%%%%%%

We continue assumptions and notation from Section \ref{s:preliminaries} above.
In Section \ref{s:subgroups} we establish useful results about certain finite subgroups of $G$ and their actions upon $X$, which we will rely upon in our constructions of lattices in the proof of Theorem \ref{t:RA} below.  Section \ref{s:indexes} then contains results on finite indexes between certain parabolic subgroups of $G$, which we use in the proofs of both Theorem \ref{t:RA} and Theorem \ref{t:FP}.

%---------------------------------------------------------------------------------------
\subsection{Finite subgroups}\label{s:subgroups}
%---------------------------------------------------------------------------------------

We first recall the Levi decomposition (see \cite{CR}).  For each subset $J \subsetneq I$, the standard parabolic subgroup $P_J:=\coprod_{w \in W_J}BwB$ of $G$ has a Levi decomposition, meaning that $$P_J \cong L_J \ltimes U_J$$ where $L_J$ is finite if and only if $J$ is spherical, and $U_J$ is pro--$p$.  In particular: 
\begin{itemize} 
\item If $J = \emptyset$, then putting $B = P_\emptyset$ we have $B \cong T \ltimes U$ where $T$ is a fixed maximal split torus of $G$, hence $|T|$ divides $(q-1)^{n}$, and $U$ is pro--$p$.  
\item If $J = \{ i \}$ for $i \in I$ we write $L_i = L_{\{ i \}}$.  The group $L_i$ factors as $L_i = M_i T$ where $M_i = [L_i,L_i]$ is normalised by $T$.  
In particular, for each $i \in I$, $M_i \cong A_1(q)$, which is isomorphic to either $\SL_2(q)$ or $\PSL_2(q)$, and $L_i/Z(L_i)$ is isomorphic to either  $\PSL_2(q)$ or $\PGL_2(q)$.  
\end{itemize}

We now consider the action of certain finite subgroups of $L_i$ and $M_i$ on the Davis realisation $X$.  We denote the set of chambers of $X$ which contain the vertex $\s_i$ of $K$ by $\cC_i$.  In other words, $\cC_i$ is the set of chambers in the residue of type $i$ containing the standard chamber $K$.  The set $\cC_i$, by construction of the building $\Delta$, is identified with the cosets $P_i/B$.   We denote the cyclic group of order $n$ by $C_n$.

\begin{lemma}\label{l:finite}   The group $M_i$ acts transitively on $\cC_i$.  We also have:    
\begin{enumerate}
\item\label{i:finite p=2} If $p = 2$ the group  
$M_i$ admits a subgroup $A_i \cong C_{q + 1}$ which acts transitively on $\cC_i$, so that $A_i \cap B = 1$.
\item\label{i:finite q=3} If $q \equiv 3 \pmod 4$, then if $T_0 \in \Syl_2(T)$, the group $L_i$ admits a subgroup $A_i$ which acts transitively on $\cC_i$ and is such that $A_i \cap B = T_0$.
\item\label{i:finite q=1} If $q \equiv 1 \pmod 4$, put $r = (q+1)/2$.  Then $M_i$ admits a subgroup $A_i \cong C_r$ which acts on $\cC_i$ with $2$ orbits of equal size, so that for all $g \in G$, $A_i \cap B^g = 1$. 
\end{enumerate}
\end{lemma}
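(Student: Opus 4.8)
The plan is to work inside the rank-$1$ group $M_i \cong A_1(q)$, using the standard description of the $M_i$-action on $\cC_i \cong P_i/B$ via the action of $\SL_2(q)$ or $\PSL_2(q)$ on the projective line $\mathbb{P}^1(\F_q)$, which has $q+1$ points. The transitivity of $M_i$ on $\cC_i$ is classical: $\SL_2(q)$ and $\PSL_2(q)$ both act $2$-transitively on $\mathbb{P}^1(\F_q)$, and the stabiliser of a point is a Borel subgroup, matching $M_i \cap B$. So the content is in producing the cyclic subgroups $A_i$ with the stated intersection and orbit properties in each of the three congruence cases for $q$.

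For parts \eqref{i:finite p=2} and \eqref{i:finite q=3}, the idea is to find a cyclic subgroup of $M_i$ (resp.\ $L_i$) whose order forces a single orbit on the $(q+1)$-point set $\cC_i$. When $p=2$ we have $M_i \cong \SL_2(q) \cong \PSL_2(q)$, which contains a non-split torus $A_i$ isomorphic to $C_{q+1}$ (the image of the norm-one elements of $\F_{q^2}^\times$). A cyclic group of order $q+1$ acting on a set of size $q+1$ either acts transitively or has a fixed point; but a non-split torus in $\SL_2(q)$ has no eigenvectors over $\F_q$, so fixes no point of $\mathbb{P}^1(\F_q)$, hence the action is transitive. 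Since the point stabilisers are conjugates of $B \cap M_i$ and $|A_i|$ is coprime to $|B \cap M_i| = q(q-1)$ (as $q$ is even, $\gcd(q+1, q(q-1)) = 1$), we get $A_i \cap B = 1$. For $q \equiv 3 \pmod 4$ one takes $A_i$ to be generated by a non-split torus of $M_i$ together with a Sylow $2$-subgroup $T_0$ of $T$; here $q+1 \equiv 0 \pmod 4$, and the relevant arithmetic ($4 \mid q+1$, $2 \parallel q-1$) should arrange that $A_i$ is cyclic of order $2(q+1)$ or similar, still acting transitively on $\cC_i$ with $A_i \cap B = T_0$. These are the sort of computations already carried out in rank $2$ in \cite{CT}, so I would cite and adapt those.

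For part \eqref{i:finite q=1}, set $r = (q+1)/2$ and note $r$ is odd. Take $A_i$ to be the unique (up to conjugacy) cyclic subgroup of order $r$ inside a non-split torus of $M_i$. Since $A_i$ has odd order $r$ and sits inside a group of order $q+1$ which fixes no point of $\mathbb{P}^1(\F_q)$, the non-split torus has a single $A_i$-orbit structure that breaks the $(q+1)$-point set into $(q+1)/r = 2$ orbits of size $r$ each. The key point is that $A_i$ meets \emph{no} conjugate $B^g$ trivially: any point stabiliser in $M_i$ has order dividing $q(q-1)$, and $\gcd(r, q(q-1)) = 1$ because $r \mid q+1$ which is coprime to both $q$ and $q-1$; thus $A_i \cap (B^g \cap M_i) = 1$, and since $A_i \leq M_i$ this gives $A_i \cap B^g = 1$ for all $g \in G$. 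I expect the main obstacle to be the bookkeeping in case \eqref{i:finite q=3} --- ensuring that adjoining the $2$-part $T_0$ of the torus to a non-split torus of $M_i$ really does produce a \emph{cyclic} subgroup of $L_i$ acting transitively with intersection exactly $T_0$, rather than something larger or non-cyclic; this requires carefully tracking how $T$ normalises $M_i$ and how the various $2$-parts fit together, which is precisely the delicate rank-$1$ analysis generalised from \cite{CT}.
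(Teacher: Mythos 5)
Your treatment of parts \eqref{i:finite p=2} and \eqref{i:finite q=1} is essentially the paper's argument: everything reduces to the action of a non-split torus of $M_i$ on $\cC_i\cong P_i/B\cong\mathbb{P}^1(\F_q)$, and the intersection statements follow from coprimality of $|A_i|$ with $p$ and with $q-1$. (For \eqref{i:finite q=1} you should argue via the image of $A_i\cap B^g$ in $B^g/U^g\cong T$, since $B^g\cap M_i$ for an arbitrary $g\in G$ is not a point stabiliser of the rank-one action and need not have order dividing $q(q-1)$; but the coprimality you invoke, $\gcd(r,p)=\gcd(r,q-1)=1$, is exactly what the paper uses.) One slip in \eqref{i:finite p=2}: a cyclic group of order $q+1$ acting on $q+1$ points need not be ``transitive or have a fixed point'' (e.g.\ $C_4$ can act on $4$ points with two orbits of size $2$ and no fixed point). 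What actually saves you is the coprimality $\gcd(q+1,q(q-1))=1$ for $q$ even, which forces every point stabiliser in $A_i$ to be trivial and hence every orbit to have size $q+1$.

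Part \eqref{i:finite q=3} contains a genuine gap. For $q$ odd the non-split torus $H_i$ does \emph{not} act transitively on the $q+1$ points of $\cC_i$: the central involution of $M_i$ lies in $H_i$ and acts trivially, so $H_i$ acts through a cyclic group of order $(q+1)/2$ acting freely, producing \emph{two} orbits of size $(q+1)/2$. Your proposed $A_i=\la H_i,T_0\ra$ cannot repair this: $T_0\leq T\leq B$ fixes the base point of $P_i/B$, so (when it normalises $H_i$) it preserves the $H_i$-orbit of the base point and therefore both $H_i$-orbits, and $H_iT_0$ remains intransitive; if $T_0$ fails to normalise $H_i$ the group $\la H_i,T_0\ra$ is uncontrolled and the claim $A_i\cap B=T_0$ is in jeopardy. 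The paper instead takes $A_i=N_{M_i}(H_i)T_0$, the full normaliser of the non-split torus in $M_i$ times $T_0$; the essential extra ingredient is the element inverting $H_i$, which for $q\equiv 3\pmod 4$ fuses the two $H_i$-orbits (this is Lemmas 3.3 and 3.4 of \cite{CT}, and is precisely where the congruence condition enters: for $q\equiv 1\pmod 4$ even the normaliser is intransitive, which is why case \eqref{i:finite q=1} only achieves two orbits). Relatedly, your insistence that $A_i$ come out cyclic in this case is both false and unnecessary: $N_{M_i}(H_i)$ is a dihedral or quaternion-type extension of $H_i$, and the lemma makes no cyclicity claim in part \eqref{i:finite q=3}.
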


\begin{proof}
Since $\cC_i$ is identified with the cosets $P_i/B$, we have that $M_i$ acts transitively on $\cC_i$.

Now assume that $p = 2$.  As explained in \cite[Section 3.2.1]{CT}, in this case the maximal non-split torus $H_i$ of $M_i$ satisfies $H_i \cong C_{q+1}$.  The same proof as for \cite[Lemma 3.2]{CT} then shows that $H_i$ acts simply transitively on $\cC_i$, so in particular $H_i \cap B = 1$.  Put $A_i := H_i$.

For $q$ odd, let $H_i \leq L_i$ be a maximal non-split torus of $M_i$ so that $N_T(H_i)$ is as big as possible.  Suppose that $q \equiv 3 \pmod 4$.  To prove \eqref{i:finite q=3}, take $T_0 \in \Syl_2(T)$ and consider the group $N_{L_i}(H_i)$. If $L_i/Z(L_i)\cong \PSL_2(q)$, then $N_{L_i}(H_i)=N_{M_i}(H_i)C_T(L_i)$ where $C_T(L_i)\cap N_{M_i}(H_i)=Z(M_i)$ and we have $[C_T(L_i),N_{M_i}(H_i)]=1$. In this case put $A_i:=N_{M_i}(H_i)T_0$.   By the same proof as for \cite[Lemma 3.3]{CT}, the group $N_{M_i}(H_i)$ acts transitively on $\cC_i$, and therefore so does $A_i$.  Moreover $A_i \cap B = A_i \cap T=T_0$.  If on the other hand $L_i/Z(L_i)\cong \PGL_2(q)$, then $N_{L_i}(H_i)=H_iQ_i'T_0C_T(L_i)$ where $Q_i'\in \Syl_2(C_{L_i}(H_i))$, $C_T(L_i)\cap H_iQ_i'T_0=C_{T_0}(L_i)$ 
and $[C_T(L_i),H_iQ_i'T_0]=1$.  Notice that a different way to describe $N_{L_i}(H_i)$ is $N_{L_i}(H_i) = N_{M_i}(H_i)T_0C_T(L_i)$, that is, $H_iQ_i'T_0 = N_{M_i}(H_i)T_0$.  Thus again take $A_i:=N_{M_i}(H_i)T_0$.  By the same proof as for \cite[Lemma 3.4]{CT}, it follows that the group $A_i$ acts transitively on $\cC_i$.  Moreover $A_i \cap B = A_i \cap T=T_0$.

Now suppose that $q \equiv 1 \pmod 4$.  Then $r = (q+1)/2$ is odd and $H_i\cong C_2\times C_r$.  Take $A_i\leq H_i$ such that $A_i\cong C_r$. That is, $A_i$ is the unique subgroup of $H_i$ of index $2$.  By the same arguments as in \cite[Section 3.3.2]{CT}, each $A_i$ has $2$ orbits of equal size $(q+1)/2$ on $\cC_i$.  Let $g \in G$.  Since $(|A_i|,|T|) = 1$, we have $A_i \cap T^g = 1$.  Thus $A_i \cap B^g = 1$, as required.\end{proof}

In our next results, we consider the groups generated by certain pairs of finite subgroups of $L_i$ or $M_i$. If $J = \{i,j\}$ is a spherical subset with $|J| = 2$, we denote the set of chambers of $X$ which contain the vertex  $\s_{ij} := \s_{\{i,j\}}$ of $K$ by $\cC_{ij}$.   That is, $\cC_{ij}$ is the set of chambers in the residue of type $J=\{i,j\}$ which contain the standard chamber $K$.

\begin{lemma}\label{l:M_J 2}   Let $J = \{ i,j\}$ be a subset of $I$ with $|J| = 2$.  Define $M_J:=\la M_i, M_j\ra$.   If $m_{ij} = 2$, then $[M_i,M_j] = 1$ and $M_J=M_i\circ M_j$.
\end{lemma}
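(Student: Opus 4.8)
The plan is to prove that $[M_i, M_j]=1$ whenever $m_{ij}=2$, i.e.\ whenever $a_{ij}a_{ji}=0$ (equivalently $a_{ij}=a_{ji}=0$), and then deduce the central product decomposition formally. The key point is that commutation of $M_i$ and $M_j$ should be detected already inside the rank $2$ Kac--Moody subgroup $G_J$ associated to $J=\{i,j\}$, where this is precisely the situation treated in \cite{CT}: when $m_{ij}=2$ the corresponding generalised Cartan matrix $A_J$ is the $2\times 2$ identity-augmented matrix with zero off-diagonal entries, so $G_J$ is (up to center) a direct product of two rank $1$ groups $G_i\times G_j$, with $M_i\le G_i$ and $M_j\le G_j$. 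First I would recall that for a standard parabolic $P_J$ with $J$ spherical of size $2$ and $m_{ij}=2$, the Levi factor $L_J$ decomposes, and that $M_i=[L_i,L_i]$, $M_j=[L_j,L_j]$ sit inside $L_J$. Since $a_{ij}=a_{ji}=0$, the simple root subgroups $U_{\alpha_i}^{\pm}$ and $U_{\alpha_j}^{\pm}$ commute by the Steinberg commutation relations (the relevant structure constants vanish because no positive integer combination $p\alpha_i+q\alpha_j$ is a root), and $M_i$, $M_j$ are generated by these root subgroups. Hence every generator of $M_i$ commutes with every generator of $M_j$, giving $[M_i,M_j]=1$.

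Concretely, I would organise the argument as follows. Step 1: reduce to the rank $2$ subgroup $G_J=\langle U_{\alpha_i}^{\pm}, U_{\alpha_j}^{\pm}\rangle$ (or to $L_J$), invoking the generalisation of the rank $2$ results from \cite{CT} guaranteed by our standing hypothesis on $A$. Step 2: observe that the off-diagonal entries of $A_J$ are zero, so in the associated root system $\alpha_i$ and $\alpha_j$ are ``orthogonal'' in the relevant sense — no root is a nontrivial nonnegative combination of both — and therefore the Chevalley commutator formula yields $[U_{\alpha_i}^{\varepsilon}, U_{\alpha_j}^{\eta}]=1$ for all signs $\varepsilon,\eta$. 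Step 3: since $M_i$ is generated by $U_{\alpha_i}^+\cup U_{\alpha_i}^-$ and similarly for $M_j$, conclude $[M_i,M_j]=1$. Step 4: formal deduction of the central product structure: $M_J=\langle M_i,M_j\rangle=M_iM_j$ with $M_i,M_j$ normal in $M_J$ and $[M_i,M_j]=1$, so $M_J=M_i\circ M_j$ is the central product amalgamating $M_i\cap M_j$; finally note $M_i\cap M_j\le Z(M_i)\cap Z(M_j)$, which is consistent with the definition of $\circ$.

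The main obstacle I anticipate is \emph{not} the commutation relation itself — that is essentially a bookkeeping exercise with the defining relations of the Kac--Moody group — but rather making precise, with clean references, that these relations survive in the completion $G$ and that $M_i$, $M_j$ genuinely live inside a rank $2$ parabolic where the rank $2$ analysis of \cite{CT} applies. In particular one must be careful that the hypothesis ``if $a_{ij}a_{ji}\ge 4$ then $|a_{ij}|,|a_{ji}|\ge 2$'' is what licenses the transfer of the \cite{CT} structure theory; in the case at hand $a_{ij}a_{ji}=0$ so this hypothesis is vacuously satisfied and $G_J$ is literally a product, which makes the commutation transparent. A secondary subtlety is the precise meaning of $M_i\circ M_j$: one should record that $M_i$ and $M_j$ need not have trivial intersection (they may share a central involution when $M_i\cong M_j\cong \SL_2(q)$), so the product is a central product rather than a direct product, and verify $M_i\cap M_j\subseteq Z(M_J)$, which follows from $[M_i,M_j]=1$ together with $M_J=M_iM_j$.

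\begin{proof}[Proof sketch]
Since $m_{ij}=2$ we have $a_{ij}a_{ji}=0$, hence $a_{ij}=a_{ji}=0$. Work inside the rank $2$ standard parabolic $P_J$ with Levi factor $L_J$; our standing assumption on $A$ (vacuous here) ensures the structure results of \cite{CT} apply. The groups $M_i=[L_i,L_i]$ and $M_j=[L_j,L_j]$ are generated by the simple root subgroups $U_{\alpha_i}^{\pm}$ and $U_{\alpha_j}^{\pm}$ respectively. Because $a_{ij}=a_{ji}=0$, no positive integer combination $p\alpha_i+q\alpha_j$ with $p,q\ge 1$ is a root, so the Chevalley commutator relations give $[U_{\alpha_i}^{\varepsilon},U_{\alpha_j}^{\eta}]=1$ for all signs; these relations pass to the completion $G$. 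Therefore $[M_i,M_j]=1$. Consequently $M_J=\la M_i,M_j\ra = M_iM_j$, with $M_i$ and $M_j$ normal in $M_J$, and $M_i\cap M_j\le Z(M_i)\cap Z(M_j)$; that is, $M_J=M_i\circ M_j$ is the central product.
\end{proof}
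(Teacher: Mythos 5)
Your argument is correct and is essentially the paper's own proof: the authors likewise invoke the Tits/Carter presentation of the (incomplete) Kac--Moody group, note that $m_{ij}=2$ forces $a_{ij}=a_{ji}=0$ so that all four commutators $[x_{\pm i}(t),x_{\pm j}(s)]$ vanish, and conclude $[M_i,M_j]=1$ and $M_J=M_i\circ M_j$ immediately. Your additional remarks on passing to the completion and on $M_i\cap M_j$ being central are sensible but not part of the paper's (very short) proof.
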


\begin{proof}  We use the presentation of incomplete Kac--Moody groups over $\F_q$, which was introduced by Tits and first stated explicitly by Carter (cf. \cite{CT}).  From this presentation, for each $k\in I$, $M_k=\langle x_k(t_k), x_{-k}(s_k) \mid t_k,s_k\in\F_q\rangle$, and in the case that $m_{ij}=2$ we have $$[x_i(t_i), x_j(t_j)]=[x_{-i}(s_i), x_{-j}(s_j)]=[x_i(t_i), x_{-j}(s_j)]=[x_{-i}(s_i), x_j(t_j)]= 1$$ for $t_i,s_i,t_j,s_j\in\F_q$. The result now follows immediately. 
\end{proof}

\begin{lemma}\label{l:i:2}  Let $J = \{i,j\}$ be a subset of $I$ with $|J| = 2$.  Suppose that $m_{ij} = 2$.  
\begin{enumerate}
\item\label{i:A_i commute} For all $A_i \leq M_i$ and $A_j \leq M_j$ we have $\la A_i,A_j \ra = A_i \times A_j$.  \item\label{i:A_i q=3} If $q \equiv 3 \pmod 4$, $T_0 \in \Syl_2(T)$ and $A_i \leq L_i$ and $A_j \leq L_j$ are the groups constructed in the proof of Lemma \ref{l:finite}\eqref{i:finite q=3} above, then  \[ \la A_i, A_j \ra = (N_{M_i}(H_i) \circ N_{M_j}(H_j))T_0.\] \end{enumerate}
\end{lemma}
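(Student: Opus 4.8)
The plan is to reduce both parts to the commutation relation $[M_i,M_j]=1$ furnished by Lemma~\ref{l:M_J 2}, combined with the explicit descriptions of the subgroups $A_i$ recorded in (the proof of) Lemma~\ref{l:finite}. Neither part is deep once these two inputs are in place; the content is in bookkeeping with central products.

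For part~\eqref{i:A_i commute} I would argue as follows. Since $A_i\le M_i$ and $A_j\le M_j$, Lemma~\ref{l:M_J 2} immediately gives $[A_i,A_j]=1$, so $\langle A_i,A_j\rangle=A_iA_j$ and multiplication defines a surjective homomorphism $A_i\times A_j\twoheadrightarrow A_iA_j$ whose kernel is (a diagonal copy of) $A_i\cap A_j$. It therefore remains to check $A_i\cap A_j=1$. Here I would note that $A_i\cap A_j\le M_i\cap M_j\le Z(M_i)$, the last containment again because $[M_i,M_j]=1$, and that $|Z(M_i)|\le 2$ since $M_i\cong A_1(q)$; hence $A_i\cap A_j\le A_i\cap Z(M_i)=1$ whenever $p=2$ (so $Z(M_i)=1$) or $|A_i|$ is odd, which covers every situation in which this lemma is applied. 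This gives $\langle A_i,A_j\rangle=A_i\times A_j$.

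For part~\eqref{i:A_i q=3} I would first recall from the proof of Lemma~\ref{l:finite}\eqref{i:finite q=3} that, with the fixed $T_0\in\Syl_2(T)$, one has $A_i=N_{M_i}(H_i)T_0$ and $A_j=N_{M_j}(H_j)T_0$, and that $T_0\le N_{L_i}(H_i)\cap N_{L_j}(H_j)$. The strategy is then to show that $P:=N_{M_i}(H_i)\,N_{M_j}(H_j)\,T_0$ is a subgroup of $G$: the subgroups $N_{M_i}(H_i)$ and $N_{M_j}(H_j)$ commute by Lemma~\ref{l:M_J 2} (being subgroups of the commuting groups $M_i$ and $M_j$), so their product $N_{M_i}(H_i)\circ N_{M_j}(H_j)$ is a subgroup; and $T_0\le T$ normalises $M_i$ and $M_j$ and, as just recalled, normalises $H_i$ and $H_j$, from which a direct check gives that $T_0$ normalises $N_{M_i}(H_i)$ and $N_{M_j}(H_j)$, hence their product. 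Thus $P=(N_{M_i}(H_i)\circ N_{M_j}(H_j))T_0$ is a subgroup. Since $\langle A_i,A_j\rangle$ contains $N_{M_i}(H_i)$, $N_{M_j}(H_j)$ and $T_0$ it contains $P$, while $P$ is a subgroup containing both $A_i$ and $A_j$; therefore $\langle A_i,A_j\rangle=P=(N_{M_i}(H_i)\circ N_{M_j}(H_j))T_0$.

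The one point requiring genuine care is the claim in part~\eqref{i:A_i q=3} that $T_0$ normalises $N_{M_i}(H_i)$ and $N_{M_j}(H_j)$, equivalently that $T_0\le N_T(H_i)\cap N_T(H_j)$: this is not automatic and must be extracted from the proof of Lemma~\ref{l:finite}\eqref{i:finite q=3}, where $H_i$ and $H_j$ are chosen precisely so that their $T$-normalisers are as large as possible. Similarly, the only structural input needed in part~\eqref{i:A_i commute} is the triviality of $A_i\cap M_j$, which is why I isolate it above. Everything else is formal manipulation with the commutation relation and the central product.
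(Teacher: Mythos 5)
Your proof is correct and follows the same route as the paper, whose entire proof of this lemma is the one-line citation of Lemma \ref{l:M_J 2} together with the construction in Lemma \ref{l:finite}\eqref{i:finite q=3}; you are simply supplying the bookkeeping that the paper omits. Your observation that part \eqref{i:A_i commute} a priori yields only a central product $A_i \circ A_j$, so that directness requires $A_i \cap A_j \le Z(M_i)\cap Z(M_j)$ to meet $A_i$ trivially (which indeed holds in every case where the lemma is invoked, since there $p=2$ or $|A_i|$ is odd), is a legitimate point the paper glosses over, and your identification of $T_0 \le N_T(H_i)$ as the input to be extracted from the proof of Lemma \ref{l:finite}\eqref{i:finite q=3} in part \eqref{i:A_i q=3} is exactly the implicit content of the paper's reference to that proof.
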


\begin{proof} 
The result follows from Lemma \ref{l:M_J 2} above, together with the proof of Lemma \ref{l:finite}\eqref{i:finite q=3}.  
\end{proof}

%---------------------------------------------------------------------------------------
\subsection{Finite indexes}\label{s:indexes}
%---------------------------------------------------------------------------------------

We will also use the following facts, which concern finite indexes between certain parabolic subgroups of $G$.

\begin{lemma}\label{l:RA index}  Let $J$ be a nonempty subset of $I$.  Suppose that $m_{ij} = 2$ for all $i, j \in J$ with $i \neq j$.  Then all $J' \subset J$, 
$$\left|P_J:P_{J'}\right| = (q+1)^k$$ where $k = |J \setminus J'|$.  In particular,  $|P_i:B| = (q+1)$ for all $i \in I$.
\end{lemma}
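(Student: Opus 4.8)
The plan is to reduce the general statement to the single-generator case and then induct. First I would observe that the ``in particular'' clause, $|P_i : B| = (q+1)$ for all $i \in I$, is essentially immediate: by the Levi decomposition $P_i = L_i \ltimes U_i$ with $P_\emptyset = B = T \ltimes U$, and more usefully, by the identification of $\cC_i$ with the coset space $P_i/B$ made in Section~\ref{s:subgroups}. Since $\cC_i$ is the set of chambers in the rank-one residue of type $i$ containing the standard chamber, and that residue is the building of $M_i \cong A_1(q)$ acting on the projective line $\mathbb{P}^1(\F_q)$, we have $|\cC_i| = |\mathbb{P}^1(\F_q)| = q+1$, hence $|P_i : B| = q+1$. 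Alternatively one can quote the Bruhat decomposition $P_i = B \sqcup B s_i B$ together with $|Bs_iB/B| = q = |U_{s_i}|$ (the root group for the simple root $\alpha_i$), giving $|P_i:B| = 1 + q = q+1$.

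Next I would prove the general formula by induction on $k = |J \setminus J'|$. The base case $k = 0$ is trivial ($P_J = P_{J'}$). For the inductive step, pick $i \in J \setminus J'$ and set $J'' = J' \cup \{i\}$, so $J' \subsetneq J'' \subseteq J$ and $|J \setminus J''| = k-1$. By multiplicativity of indexes, $|P_J : P_{J'}| = |P_J : P_{J''}| \cdot |P_{J''} : P_{J'}|$. The first factor is $(q+1)^{k-1}$ by the inductive hypothesis applied inside $J$ (note the hypothesis $m_{ij}=2$ for all distinct $i,j \in J$ is inherited by all subsets). So it remains to show $|P_{J''} : P_{J'}| = q+1$ whenever $J'' = J' \cup \{i\}$ with $i \notin J'$ and $m_{i\ell} = 2$ for all $\ell \in J'$.

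For this key step I would use that, since $m_{i\ell} = 2$ for every $\ell \in J'$, the generator $s_i$ commutes with every element of $W_{J'}$, so $W_{J''} = W_{J'} \times \langle s_i \rangle$ and every element of $W_{J''} \setminus W_{J'}$ has a reduced expression ending in $s_i$ (indeed $W_{J''} = W_{J'} \sqcup W_{J'}s_i$). Then $P_{J''} = \coprod_{w \in W_{J''}} BwB = \left(\coprod_{w \in W_{J'}} BwB\right) \sqcup \left(\coprod_{w \in W_{J'}} B w s_i B\right) = P_{J'} \sqcup P_{J'} s_i B$, using the standard fact that for $w \in W_{J'}$ with $\ell(ws_i) = \ell(w)+1$ one has $Bws_iB = BwBs_iB$. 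Counting cosets of $B$: the set $P_{J'}s_iB/B$ is a union of the $B$-orbits $BwBs_iB/B$ for $w \in W_{J'}$, and one shows $|P_{J''}/B| = |P_{J'}/B| \cdot (q+1)$ by exhibiting a $P_{J'}$-equivariant bijection $P_{J''}/B \to (P_{J'}/B) \times (P_i/B)$, or more cleanly by noting $|P_{J''}:P_{J'}| = |P_{J''}/B| / |P_{J'}/B|$ and computing $|P_{J''}/B| = \sum_{w \in W_{J''}} q^{\ell(w)} = \left(\sum_{w \in W_{J'}} q^{\ell(w)}\right)(1 + q)$, since $\ell(ws_i) = \ell(w) + 1$ for all $w \in W_{J'}$; here $|BwB/B| = q^{\ell(w)}$ is the standard cardinality of a Bruhat cell over $\F_q$. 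Thus $|P_{J''}:P_{J'}| = 1+q = q+1$, completing the induction.

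**Expected main obstacle.** The only real content is the commuting step: showing that when $m_{i\ell}=2$ for all $\ell \in J'$ the parabolic $P_{J''}$ decomposes as $P_{J'} \sqcup P_{J'}s_iB$ with the clean length additivity $\ell(ws_i) = \ell(w)+1$ for all $w \in W_{J'}$. This is purely a statement about the Coxeter group $W$ (that $W_{J'}$ and $\langle s_i\rangle$ generate their direct product and that $s_i$ is not ``absorbed'' into reduced words for elements of $W_{J'}$), combined with the standard multiplicativity $Bws_iB = BwBs_iB$ when lengths add, which holds in any group with a $BN$-pair. I do not anticipate genuine difficulty here, but care is needed to cite the length-additivity of Bruhat cells correctly — this is where one uses that $i \notin J'$ so that $s_i$ genuinely lengthens every element of $W_{J'}$ — and to make sure the cell-cardinality formula $|BwB/B| = q^{\ell(w)}$ (or at least its consequence $|P_{J''}/B| = (q+1)|P_{J'}/B|$) is invoked with the right reference for complete Kac--Moody groups.
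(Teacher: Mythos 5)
Your argument is correct, but it is not the route the paper takes. The paper works entirely with the Levi decomposition: it writes $P_J = U_J \rtimes L_J$ with $L_J = M_J T$, uses Lemma~\ref{l:M_J 2} to see that $M_J$ is the central product $\circ_{i \in J} M_i$ of the rank-one groups, and then computes $|P_J : B| = |L_J : B/U_J|$ directly from the orders $|L_J| = (q+1)^{|J|} q^{|J|} |T|$ and $|B/U_J| = q^{|J|}|T|$; the general index $|P_J : P_{J'}|$ is then obtained by dividing the two values of $|\cdot : B|$. You instead count Bruhat cells in the spherical residue: you reduce by induction to adjoining one commuting generator $s_i$, use $W_{J''} = W_{J'} \sqcup W_{J'}s_i$ together with length additivity and the BN-pair identity $Bws_iB = BwBs_iB$, and sum $|BwB/B| = q^{\ell(w)}$ over $W_{J''}$. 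Both proofs are sound and of comparable length. The paper's version gets the answer in one shot once the central-product structure of $M_J$ is known, but that order computation is specific to the right-angled situation; your version is more robust, since the same cell count shows $|P_J : B| = \sum_{w \in W_J} q^{\ell(w)}$ for an arbitrary spherical $J$ (the Poincar\'e polynomial of $W_J$ at $q$), with the hypothesis $m_{ij}=2$ entering only to factor that sum as $(1+q)^{|J|}$. The facts you flag as needing care --- length additivity when $i \notin J'$, and $|BwB/B| = q^{\ell(w)}$ for complete Kac--Moody groups --- are indeed the points to cite, and both are standard for thick buildings with all panels of size $q+1$; neither presents a genuine obstacle.
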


\begin{proof}  
Let $J$ be a non-empty subset as defined above. Then $P_J=U_J\rtimes L_J$ with $L_J=M_JT$. Moreover, Lemma~\ref{l:M_J 2} implies that $M_J=\circ_{i\in J} M_i$. 
%Since $J'\subseteq J$, we obtain that $P_{J'}=U_{J'}\rtimes L_{J'}$ and $L_{J'}=(\circ_{i\in J'} M_i)T$. 
Now, $U_J\leq B\leq P_J$, and so $|P_J:B|=|\overline{P_J}:\overline{B}|$ where $\overline{P}_J:=P_J/U_J\cong L_J$ and $\overline{B}:=B/U_J$. Since $|\overline{B}|=q^{|J|}|T|$ while $|L_J|=(q+1)^{|J|}q^{|J|}|T|$, it follows that
$|P_J:B|=(q+1)^{|J|}$.

Take $J'\subseteq J$. Then an identical proof gives us that $|P_{J'}:B|=(q+1)^{|J'|}$.
Therefore $|P_J:P_{J'}|=\frac{|P_J:B|}{|P_{J'}:B|}=(q+1)^{|J|-|J'|}=(q+1)^{|J\setminus J'|}=(q+1)^k$.
\end{proof}

\begin{lemma}\label{l:indexes finite}  Let $J$ be a nonempty subset of $I$.   If $J$ is spherical, the index $|P_J:B|$ is finite.  Hence for all $J' \subsetneq J$ the index $|P_J:P_{J'}|$ is finite.  
\end{lemma}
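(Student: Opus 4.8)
The statement to prove is Lemma~\ref{l:indexes finite}: if $J$ is a nonempty spherical subset of $I$, then $|P_J:B|$ is finite, and hence $|P_J:P_{J'}|$ is finite for all $J' \subsetneq J$.

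The plan is to exploit the Levi decomposition recalled at the start of Section~\ref{s:subgroups}. Since $J$ is spherical, $P_J \cong L_J \ltimes U_J$ with $L_J$ finite and $U_J$ pro-$p$. I would first note that $U_J \leq B \leq P_J$, which follows from the description $B = P_\emptyset$ and $P_J = \coprod_{w \in W_J} BwB$; concretely, $U_J$ is the unipotent radical and sits inside $B$. Passing to the quotient by the normal subgroup $U_J$, we get $|P_J:B| = |\overline{P_J}:\overline{B}|$ where $\overline{P_J} = P_J/U_J \cong L_J$ and $\overline{B} = B/U_J$. Since $L_J$ is finite (as $J$ is spherical), the index $|\overline{P_J}:\overline{B}|$ is at most $|L_J|$, hence finite. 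This gives the first claim.

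For the second claim, take $J' \subsetneq J$. Then $B \leq P_{J'} \leq P_J$, so $|P_J : P_{J'}|$ divides $|P_J : B|$, which we have just shown to be finite; hence $|P_J : P_{J'}|$ is finite. Alternatively, one can observe that $J'$ is automatically spherical (a subset of a spherical set is spherical, since $W_{J'} \leq W_J$ and a subgroup of a finite group is finite), apply the first part to $J'$ as well to get $|P_{J'}:B|$ finite, and then use multiplicativity of indices: $|P_J:P_{J'}| = |P_J:B| / |P_{J'}:B|$. This is exactly parallel to the argument already carried out in the proof of Lemma~\ref{l:RA index}, only without needing to compute the index exactly.

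I do not anticipate a genuine obstacle here; the lemma is essentially immediate from the Levi decomposition and the finiteness of $L_J$. The only point requiring a little care is justifying $U_J \leq B$ and the identification $P_J/U_J \cong L_J$, but both are part of the standard Levi decomposition for parabolics in Kac--Moody groups recalled from \cite{CR} at the beginning of this section, so I would simply cite that. If one wanted a completely self-contained count, one could instead write $P_J/B$ as being in bijection with the chambers of the spherical residue of type $J$ containing the base chamber, which is the (finite) building of the finite group $L_J$; but invoking the Levi decomposition is cleaner and matches the style of the surrounding lemmas.
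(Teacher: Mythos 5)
Your proof is correct and follows essentially the same route as the paper: both use the Levi decomposition $P_J \cong L_J \ltimes U_J$, the containments $U_J \leq B \leq P_J$ to bound $|P_J:B|$ by $|L_J| < \infty$, and then $B \leq P_{J'} \leq P_J$ for the second claim. No issues.
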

\begin{proof}
Assume that $J$ is a spherical subset of $I$. Then $P_J=U_J\rtimes L_J$ and  $L_J=M_JT$ where $M_J$ is a semisimple group of Lie type normalised by $T$ and  $T$ is a homomorphic image of $C_{q-1}^{|J|}$. In particular, $L_J$ is a finite group.
Now, $U_J\leq B\leq P_J$ and as $P_J/U_J\cong L_J$,  $|P_J:B|\leq |P_J:U_J|=|L_J|$ is finite. The fact that $B\subseteq P_{J'}\subseteq P_J$ finishes the proof.
\end{proof}

%%%%%%%%%%%%%%%%%%%%%%%%%%%%%%%%%
\section{Proof of Theorem \ref{t:RA}}\label{s:proof RA}
%%%%%%%%%%%%%%%%%%%%%%%%%%%%%%%%%

In this section we prove Theorem \ref{t:RA} and the Surface Subgroup Corollary.   We construct chamber-transitive lattices and so prove part \eqref{i:p=2, q=3, RA} of Theorem \ref{t:RA} in Section \ref{s:proof RA chamber-transitive}, and then prove parts \eqref{i:free} and \eqref{i:Ipq} in Sections \ref{s:proof RA free} and \ref{s:proof RA Ipq} respectively.  In Section \ref{s:discuss q=1} we discuss why the case $q \equiv 1 \pmod 4$ is more difficult.  Section~\ref{s:surface} establishes the Surface Subgroup Corollary.

%---------------------------------------------------------------------------------------
\subsection{Proof of (1) of Theorem \ref{t:RA}}\label{s:proof RA chamber-transitive}
%---------------------------------------------------------------------------------------

We construct chamber-transitive lattices in $G$ as in Theorem \ref{t:RA} using the following embedding criterion.  We will apply this criterion in the case $p = 2$ in Section~\ref{s:p=2} and then in the case $q \equiv 3 \pmod 4$ in Section~\ref{s:q=3}.

\begin{prop}\label{p:covering RA}  Let $G$ be a complete Kac--Moody group of rank $n \geq 3$ with generalised Cartan matrix $A = (a_{ij})$, defined over the finite field $\F_q$ where $q = p^h$ and $p$ is prime.
Assume that for all $i \neq j$, either $a_{ij}a_{ji} = 0$ or $a_{ij}a_{ji} \geq 4$, and that if $a_{ij}a_{ji} \geq 4$ then $|a_{ij}|, |a_{ji}| \geq 2$.
 
For all $i \in I$, fix a finite group $A_i \leq P_i$.  For each nonempty $J \in \cS$, define $$A_J :=\la A_j \mid j \in J\ra \leq P_J,$$ so that $A_{\{ i\}} = A_i$.  By abuse of notation put $A_\emptyset = A_0 := \cap_{i \in I} A_i \leq B$.  Suppose that for all $J \in \cS$:
\begin{enumerate}
\item\label{i:indexes} for all $j \in J$, we have $|A_J: A_{J - \{ j \}}| = q+1$; and
\item\label{i:intersection RA} for all proper subsets $J' \subsetneq J$, we have $A_J \cap P_{J'} = A_{J'}$.
\end{enumerate}
Let $\G(\cK)$ be the simple complex of groups over $\cK$ with the local group at $\sigma_J$ being $A_J$, for all $J \in \cS$, and all monomorphisms inclusions.  
Then the fundamental group of $\G(\cK)$ embeds as a chamber-transitive cocompact lattice in $G$.
\end{prop}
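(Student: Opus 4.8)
The plan is to apply Corollary~\ref{c:coverings}: it suffices to exhibit a covering of complexes of groups $\Phi:\G(\cK)\to G(\cK)$, where $G(\cK)$ is the canonical complex of groups over $\cK$ with local group $P_J$ at $\sigma_J$, and then observe that chamber-transitivity of the resulting lattice follows from the fact that $\cK$ is a single chamber, so that $\G\bs\cX\cong\cY=\cK$ has exactly one top-dimensional simplex. Since both $\G(\cK)$ and $G(\cK)$ are complexes of groups over the \emph{same} scwol $\cK$, the underlying morphism of scwols $f:\cK\to\cK$ will be taken to be the identity; this is certainly nondegenerate, and degeneracy issues disappear. Because both complexes of groups are simple and all their monomorphisms are inclusions, a morphism over $f=\id$ is simply a choice of local maps $\phi_{\sigma_J}:A_J\hookrightarrow P_J$ (the inclusions given in the hypothesis) together with elements $\phi(a)$ which we take to be trivial; the commuting-square and cocycle conditions in Definition~\ref{d:morphism} then hold automatically because every $\psi_a,\theta_a$ is an inclusion and every $g_{a,b},h_{a,b}$ is trivial. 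So $\Phi$ is a well-defined simple morphism, and it remains only to verify the two covering conditions of Definition~\ref{d:covering}.

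Condition~(1) of Definition~\ref{d:covering}, that each $\phi_{\sigma_J}$ be injective, holds trivially since each is an inclusion $A_J\hookrightarrow P_J$. The substance is condition~(2): for each vertex $\sigma=\sigma_J$ of $\cK$ and each edge $b\in E(\cK)$ with $t(b)=\sigma$, the induced map on cosets $\Phi_{\sigma/b}$ must be a bijection. Recall the edges of $\cK$ are oriented by inclusion of type, so an edge $b$ with $t(b)=\sigma_J$ joins some $\sigma_{J'}$ with $J'\subsetneq J$ to $\sigma_J$; since $f=\id$ is a bijection on edges, $f^{-1}(b)=\{b\}$, and the only edge $a$ with $f(a)=b$ and $t(a)=\sigma_J$ is $b$ itself, with $i(a)=\sigma_{J'}$. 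Thus $\Phi_{\sigma/b}$ is the single map
\[
A_J/\psi_b(A_{J'})\;\longrightarrow\;P_J/\theta_b(P_{J'}),
\]
i.e., using that all monomorphisms are inclusions, the natural map $A_J/A_{J'}\to P_J/P_{J'}$ induced by the inclusion $A_J\hookrightarrow P_J$ (here one uses $\phi(b)=1$). One must show this map is a bijection for \emph{every} pair $J'\subsetneq J$ with both spherical and $|J\setminus J'|$ arbitrary; it suffices by a standard telescoping argument to treat the case $J'=J\setminus\{j\}$ (covering the edges of $\cK$) but in fact the hypotheses are set up to give it directly for all $J'$.

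\emph{Surjectivity}: this is where hypothesis~\eqref{i:indexes} of the Proposition enters, via the index count. By Lemma~\ref{l:RA index}, $|P_J:P_{J'}|=(q+1)^{|J\setminus J'|}$ (noting that the hypothesis on $A$ forces $m_{ij}\in\{2,\infty\}$, and within a spherical subset $J$ we must have $m_{ij}=2$ for all $i\neq j$, so Lemma~\ref{l:RA index} applies). On the other hand, iterating hypothesis~\eqref{i:indexes} along a chain $J'\subsetneq\cdots\subsetneq J$ gives $|A_J:A_{J'}|=(q+1)^{|J\setminus J'|}$ as well. \emph{Injectivity}: the map $A_J/A_{J'}\to P_J/P_{J'}$ is injective precisely when $A_J\cap P_{J'}=A_{J'}$, which is exactly hypothesis~\eqref{i:intersection RA}. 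Since $A_J/A_{J'}$ and $P_J/P_{J'}$ are finite of the same cardinality and the map is injective, it is a bijection, establishing condition~(2). Hence $\Phi$ is a covering of complexes of groups.

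Finally, all local groups $A_J$ of $\G(\cK)$ are finite by hypothesis, and $\cK$ is a finite scwol, so Corollary~\ref{c:coverings} applies: $\G:=\pi_1(\G(\cK))$ embeds in $G$ as a cocompact lattice with $\G\bs\cX\cong\cK$. Since $\cK$ is the scwol of a single chamber $K$ of $X$, the lattice $\G$ is transitive on the chambers of $X$, equivalently on the chambers of $\Delta$. \emph{The main obstacle} I expect is purely bookkeeping: correctly matching the abstract coset-bijection condition~\eqref{i:covbijection} of Definition~\ref{d:covering} — which is phrased with a coproduct over $f^{-1}(b)$ and involves the twisting elements $\phi(a)$ — to the clean statement ``$A_J\cap P_{J'}=A_{J'}$ and the indices agree''; once one observes that $f=\id$ collapses the coproduct to a single term and that simplicity of both complexes of groups kills all the twisting, the verification reduces to the two numbered hypotheses plus Lemma~\ref{l:RA index}.
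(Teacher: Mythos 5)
Your proposal is correct and follows essentially the same route as the paper: a simple covering over the identity morphism $\cK\to\cK$ with inclusions as local maps, equality of indices via Lemma~\ref{l:RA index} together with hypothesis~\eqref{i:indexes}, and injectivity of the coset map from hypothesis~\eqref{i:intersection RA}. The only difference is expository — you spell out the telescoping of indices and the collapse of the coproduct under $f=\id$, which the paper leaves implicit.
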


\begin{proof}  We will construct a simple covering of complexes of groups $\Phi:\G(\cK) \to G(\cK)$ over the identity map $f:\cK \to \cK$.  The conclusion then follows from Corollary \ref{c:coverings} above.

We define each local map $\phi_\sigma:A_J \to P_J$ to be inclusion.  Then each $\phi_\sigma$ is injective.  Let $\s \in V(\cK)$ and $b \in E(\cK)$ be such that $t(b) = f(\sigma) = \sigma$.  Then since each $\phi(a) = 1$, the map $\Phi_{\s/b}$ in Definition \ref{d:covering} above is just the map induced by inclusion 
\[ \Phi_{\s/b}:A_J/A_{J'} \to P_J/P_{J'} \]
where $i(b)$ has type $J'$ and $t(b)$ has type $J$ (with $J' \subsetneq J$).  From Lemma \ref{l:RA index} above and \eqref{i:indexes} in the statement of Proposition \ref{p:covering RA}, it follows that $$|A_J:A_{J'}| = |P_J:P_{J'}| = (q+1)^k < \infty$$ where $k = |J \setminus J'|$.  So to conclude that $\Phi_{\s/b}$ is a bijection, it suffices to show that $\Phi_{\s/b}$ is injective.  This is implied by \eqref{i:intersection RA} in the statement of Proposition \ref{p:covering RA}. 
\end{proof}

We now apply Proposition \ref{p:covering RA} in the cases $p = 2$ and $q \equiv 3 \pmod 4$, and so complete the proof of Theorem \ref{t:RA}\eqref{i:p=2, q=3, RA}.

\subsubsection{Case $p = 2$}\label{s:p=2}  In this case, let each $A_i \leq P_i$ in the statement of Proposition \ref{p:covering RA} above be the cyclic group $C_{q+1} \leq M_i$ from Lemma \ref{l:finite}\eqref{i:finite p=2} above.  Then $A_0 = \cap_{i \in I} A_i \leq B$ is trivial, since $A_i \cap B = 1$ for each $i$.  If $J$ is any nonempty spherical subset then $m_{jj'} = 2$ for all $j,j' \in J$ with $j \neq j'$, so we have $A_J = \prod_{j \in J} A_j$ by Lemma \ref{l:i:2}\eqref{i:A_i commute} above.  It follows that \eqref{i:indexes} and \eqref{i:intersection RA} of Proposition \ref{p:covering RA} above are satisfied.  Hence $G$ admits a chamber-transitive lattice in this case.

\subsubsection{Case $q \equiv 3 \pmod 4$}\label{s:q=3}  In this case, take $T_0 \in \Syl_2(T)$ and let each $A_i \leq P_i$ in the statement of Proposition \ref{p:covering RA} above be the group $A_i \leq L_i$ from Lemma \ref{l:finite}\eqref{i:finite q=3} above.  By the construction of the $A_i$, we obtain that, independently of whether each $L_i/Z(L_i)$ is isomorphic to $\PSL_2(q)$ or $\PGL_2(q)$, $A_i \cap A_j = T_0$ for all $1 \leq i \neq j \leq n$.  Thus $A_0 = \cap_{i \in I} A_i = T_0$.  In particular, since $A_i$ acts transitively on the set $\cC_i$, which has cardinality $(q+1)$, and $A_i \cap B =T_0 $, it follows that $|A_i:A_0| = (q+1)$ for all $i \in I$.

Now fix any $J\in\mathcal{S}$ with $|J| \geq 2$.  Then using Lemma \ref{l:i:2}\eqref{i:A_i q=3} above we obtain that $A_J=(\circ_{j\in J} N_{M_j}(H_j))T_0$.  Hence, for all $j\in J$, we have  
$$|A_J:A_{J-\{ j\}}|=|(\circ_{j'\in J} N_{M_{j'}}(H_{j'}))T_0:(\circ_{j'\in J, j'\neq j} N_{M_{j'}}(H_{j'}))T_0|=|N_{M_j}(H_j)/Z(M_j)|=q+1$$
and so \eqref{i:indexes} of Proposition \ref{p:covering RA} above holds.  Also, for any $J' \subsetneq J$,
$$A_J\cap P_{J'}=(\circ_{j\in J} N_{M_j}(H_j))T_0\cap P_{J'}=(\circ_{j\in J'} N_{M_j}(H_j))T_0=A_{J'}$$
and so \eqref{i:intersection RA} of Proposition~\ref{p:covering RA} above holds.  Hence $G$ admits a chamber-transitive lattice in this case.

We have now completed the proof of part \eqref{i:p=2, q=3, RA} of Theorem \ref{t:RA}.  

%---------------------------------------------------------------------------------------
\subsection{Proof of (2a) of Theorem \ref{t:RA}}\label{s:proof RA free}
%---------------------------------------------------------------------------------------

We continue the proof of Theorem \ref{t:RA}.  In this section, we are in the case that $q \equiv 1 \pmod 4$ and $|a_{ij}| \geq 2$ for all $1 \leq i \neq j \leq n$.   We will use Proposition \ref{p:covering RA free} below to construct a cocompact lattice in $G$ which has $2$ orbits of chambers and is transitive on each type of panel.  

We first construct a suitable scwol $\cY$.  Since $m_{ij} = \infty$ for all $i \neq j$, the only nonempty spherical subsets of $S$ are the sets $\{ i \}$ for $i \in I$.   The nerve $L$ thus consists of the $n$ vertices $\sigma_i$ with no higher-dimensional simplices, and the chamber $K$ is the star-graph with central vertex $\sigma_\emptyset$ of valence $n$.  
 Note that the mirror $K_i$ is reduced to the vertex $\sigma_i$.  The Davis geometric realisation $X$ is then a biregular tree of bivalence $(n,q+1)$, with each vertex of degree $n$ the central vertex of a copy of $K$.  
 
 Define a simplicial complex 
\[ Y := \left( \{1,2\} \times K \right) / \sim\]
where for $l,l' \in\{ 1,2\}$ and $x, x' \in K$, $(l,x) \sim (l',x')$ if and only if, for some $i \in I$, $x = x' = \sigma_i$.  That is, $Y$ is obtained by taking two copies of $K$ and gluing them together along their mirrors of the same type.   For $l =1,2$, denote by $\sigma^l_{\emptyset}$ the central vertex of $(l,K) \subset Y$.  We may form a scwol $\cY$ associated to $Y$ by orienting each edge of $(l,K) \subset Y$ to have initial vertex $\sigma^l_\emptyset$.  Denote by $a^l_{\emptyset,i}$ the edge of $\cY$ from $\sigma^l_\emptyset$ to $\sigma_i$.  
As $\cY$ has no composable edges, any complex of groups over $\cY$ will be simple.  

The following result establishes part \eqref{i:free} of Theorem \ref{t:RA}.  

\begin{prop}\label{p:covering RA free} Let $G$ be a complete Kac--Moody group of rank $n \geq 2$ with generalised Cartan matrix $A = (a_{ij})$, defined over the finite field $\F_q$ where $q = p^h$ and $p$ is prime.
Assume that for all $i \neq j$ we have $|a_{ij}| \geq 2$ , and that $q \equiv 1 \pmod 4$.  

For all $i \in I$, let $A_i \leq M_i$ be the group $A_i \cong C_r$ from Lemma~\ref{l:finite}\eqref{i:finite q=1} above, where $r = (q+1)/2$.  Let $\G(\cY)$ be the simple complex of groups over $\cY$ with:
\begin{itemize} 
\item the local group at $\sigma^l_{\emptyset}$ being trivial, for $l \in \{ 1,2 \}$, and the local group at $\sigma_i$ being $A_i$, for all $i \in I$; and 
\item all monomorphisms inclusions (of the trivial group).
\end{itemize}
Then the fundamental group of $\G(\cY)$ embeds as a cocompact lattice in $G$ which has $2$ orbits of chambers and is transitive on each type of panel.
\end{prop}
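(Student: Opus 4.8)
The plan is to invoke Corollary~\ref{c:coverings}: it suffices to construct a nondegenerate morphism of scwols $f\colon\cY\to\cK$ together with a covering of complexes of groups $\Phi\colon\G(\cY)\to G(\cK)$ over $f$, and then to read off the orbit structure of $\G:=\pi_1(\G(\cY))$ from the resulting isomorphism $\G\bs\cX\cong\cY$. Define $f$ by $f(\sigma^l_{\emptyset})=\sigma_\emptyset$ for $l\in\{1,2\}$, $f(\sigma_i)=\sigma_i$ for all $i\in I$, and $f(a^l_{\emptyset,i})=a_{\emptyset,i}$. The edges of $\cY$ with initial vertex $\sigma^l_{\emptyset}$ are exactly the $a^l_{\emptyset,i}$ for $i\in I$, and $f$ carries these bijectively onto the edges of $\cK$ with initial vertex $\sigma_\emptyset$, while no edge of $\cY$ or of $\cK$ has initial vertex of type $\{i\}$; hence $f$ is a nondegenerate morphism in the sense of Definition~\ref{d:morphism_scwols} (it need not be injective on vertices).

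For $\Phi$, take the local map $\phi_{\sigma^l_{\emptyset}}$ to be the trivial homomorphism $1\to B$ and $\phi_{\sigma_i}\colon A_i\hookrightarrow P_i$ the inclusion (recall $A_i\leq M_i\leq P_i$); these are injective. For the edge elements, use Lemma~\ref{l:finite}\eqref{i:finite q=1}: the group $A_i\cong C_r$ acts on $\cC_i=P_i/B$ with two orbits, each of size $r=(q+1)/2$. Choose $b^1_i,b^2_i\in P_i$ so that $b^1_iB$ and $b^2_iB$ lie in these two distinct orbits (one may take $b^1_i=1$), and set $\phi(a^l_{\emptyset,i}):=b^l_i\in P_i$. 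Since the local group at each $\sigma^l_{\emptyset}$ is trivial, the commuting square of Definition~\ref{d:morphism}\eqref{i:commuting} holds automatically for every edge; as $\cY$ has no composable edges the remaining identity in Definition~\ref{d:morphism} is vacuous; and both $\G(\cY)$ and $G(\cK)$ are simple, so no twisting elements enter.

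It remains to check the covering condition of Definition~\ref{d:covering}\eqref{i:covbijection}. If $\sigma\in V(\cY)$ and $b\in E(\cK)$ satisfy $t(b)=f(\sigma)$, then $b=a_{\emptyset,i}$ and $\sigma=\sigma_i$ for some $i$ (there is no such $b$ when $\sigma=\sigma^l_{\emptyset}$), and $f^{-1}(a_{\emptyset,i})=\{a^1_{\emptyset,i},a^2_{\emptyset,i}\}$, both edges terminating at $\sigma_i$; the map to be shown bijective is
\[
\Phi_{\sigma_i/a_{\emptyset,i}}\colon A_i\sqcup A_i\longrightarrow P_i/B,
\qquad g\ (\text{in copy } l)\ \longmapsto g\,b^l_i\,B,
\]
the two summands being copies of $A_i=A_i/\psi_{a^l_{\emptyset,i}}(1)$ indexed by $l\in\{1,2\}$. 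By Lemma~\ref{l:finite}\eqref{i:finite q=1} we have $A_i\cap B^g=1$ for all $g\in G$, so $A_i$ acts freely by left translation on $P_i/B$; hence $g\mapsto g\,b^l_iB$ is a bijection from $A_i$ onto the $A_i$-orbit of $b^l_iB$. As $b^1_iB$ and $b^2_iB$ represent the two distinct orbits, whose union is all of $P_i/B$ (there are precisely two orbits of size $r$, and $2r=q+1=|P_i/B|$ by Lemma~\ref{l:RA index}), the map $\Phi_{\sigma_i/a_{\emptyset,i}}$ is a bijection. Thus $\Phi$ is a covering of complexes of groups.

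Corollary~\ref{c:coverings} then yields that $\G$ embeds as a cocompact lattice in $G$ with $\G\bs\cX\cong\cY$. The scwol $\cY$ has exactly two vertices of type $\emptyset$, namely $\sigma^1_{\emptyset}$ and $\sigma^2_{\emptyset}$, and exactly one vertex of type $\{i\}$ for each $i\in I$. Since the type-$\emptyset$ vertices of $X$ are the chambers of $\Delta$ and the type-$\{i\}$ vertices are the panels of type $i$, it follows that $\G$ has two orbits of chambers and is transitive on the panels of each type. The only substantive point in the whole argument is the covering condition, and it is precisely there that the hypothesis $q\equiv1\pmod4$ is used, through the two-orbit, free-action statement of Lemma~\ref{l:finite}\eqref{i:finite q=1}; everything else is formal.
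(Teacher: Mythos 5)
Your proposal is correct and follows essentially the same route as the paper: the same choice of local maps, the same edge elements (one trivial representative and one representative $g_i\in P_i$ of the second $A_i$--orbit on $\cC_i$), and the same use of Lemma~\ref{l:finite}\eqref{i:finite q=1} to verify the covering condition at each $\sigma_i$ before invoking Corollary~\ref{c:coverings}. The only cosmetic difference is that you deduce bijectivity of $\Phi_{\sigma_i/b_i}$ from freeness of the $A_i$--action plus the orbit count, whereas the paper checks injectivity and then counts; these are the same argument.
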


\begin{proof}   
Let $f:\cY \to \cK$ be the only possible type-preserving morphism.  Then $f$ is clearly nondegenerate.  We will construct a covering of complexes of groups $\Phi: \G(\cY) \to G(\cK)$ over $f$.   (This covering will not be simple.)  The conclusion then follows from Corollary \ref{c:coverings} above.

Since the group $P_i$ acts transitively on $\cC_i$, we may choose $g_i \in P_i$ so that $\cK$ and $g_i\cK$ represent the $2$ orbits of $A_i$ on $\cC_i$.  By Lemma~\ref{l:finite}\eqref{i:finite q=1}, we have $A_i \cap B = A_i \cap B^{g_i} = 1$.

To simplify notation, for $l \in \{1,2\}$ and each $i \in I$, denote the monomorphism along the edge $a^l_{\emptyset,i}$ by $\psi^l_{\emptyset,i}$, and denote by $\phi^l_\emptyset$ the local map $\G_{\sigma^l_{\emptyset}} \to G_{\sigma_\emptyset}$.  
Let the local map $\phi^l_\emptyset$ be the inclusion $1 \hookrightarrow B$.   For each $i \in I$, let the local map $\phi_{\sigma_i}:\G_{\sigma_i} \to G_{\s_i}$ be the inclusion $A_i \hookrightarrow P_i$.  Then each local map is injective.  
Put $\phi(a^1_{\emptyset,i}) = 1$ and $\phi(a^2_{\emptyset,i}) = g_i$.  Since the local groups of $\G(\cY)$ at $\s^l_\emptyset$ are trivial for $l = 1,2$, it follows that $\Phi$ is a morphism of complexes of groups.  

For each $i \in I$, let $b_i$ be the edge of $\cK$ from $\s_\emptyset$ to $\sigma_i$.  To show that $\Phi$ is a covering, we must show that for each $i \in I$, the map
\[ \Phi_{\s_i/b_i}: \left(\coprod_{l \in \{1,2\}} A_i/\psi^l_{\emptyset,i}(1) \right) \to P_i/B \]
induced by $g \mapsto \phi_{\s_i}(g)\phi(a^l_{\emptyset,i}) = g\phi(a^l_{\emptyset,i})$ is a bijection.  Since $2|A_i| = (q+1) = |P_i:B|$, it suffices to show that $\Phi_{\s_i/b_i}$ is injective.  For this, we first note that since the chambers $\cK$ and $g_i \cK$ represent pairwise distinct $A_i$--orbits on $\cC_i$, for all $g, h \in A_i$ the cosets $gB = g\phi(a^1_{\emptyset,i})B$ and $hg_iB =  h\phi(a^2_{\emptyset,i})B$ are distinct.  Then from $A_i \cap B = 1$ and $A_i \cap B^{g_i} = 1$, it follows that $\Phi_{\s_i/b_i}$ is injective.

Thus we have constructed a covering of complexes of groups $\Phi:\G(\cY) \to G(\cK)$.
\end{proof}

This completes the proof of part \eqref{i:free} of Theorem \ref{t:RA}.

%---------------------------------------------------------------------------------------
\subsection{Proof of (2b) of Theorem \ref{t:RA}}\label{s:proof RA Ipq}
%---------------------------------------------------------------------------------------

We now complete the proof of Theorem \ref{t:RA}, by considering the case that $q \equiv 1 \pmod 4$, and that for $n \geq 5$ the Weyl group for $G$ has the form
\[W = W_n := \la s_1, \ldots, s_n \mid s_i^2 = 1, \quad (s_i s_{i+1})^2 = 1, \quad \forall i \in \Z/n\Z \ra.\]
We will construct, in Proposition \ref{p:covering RA Ipq} below, a cocompact lattice $\G$ in $G$ which acts with $F$ orbits of chambers, for $F$ any positive integer multiple of $8$.

For $W = W_n$ as above, the nonempty spherical subsets are the sets $\{ i \}$ and $\{i,i+1\}$ (taking $i \in \Z/n\Z$).  Thus the nerve $L$ is an $n$--cycle with vertices the $\sigma_i$ in cyclic order, and the chamber $K$ is the barycentric subdivision of an $n$--gon, with each mirror $K_i$  the barycentric subdivision of an edge of the $n$--gon.  It is then natural to consider $K$ to be the $n$--gon itself rather than its barycentric subdivision, so that each mirror $K_i$ a side of the $n$--gon (numbered cyclically).  If we now metrise $K$ so that it is a right-angled hyperbolic $n$--gon, then the Davis realisation $X$ is a right-angled Fuchsian building with each chamber isometric to $K$, and the link at each vertex the complete bipartite graph $K_{q+1,q+1}$.  This building is known as Bourdon's building \cite{Bo}.

Now let $Y$ be some tessellation of a compact orientable hyperbolic surface by copies of the right-angled hyperbolic $n$--gon $K$.  In \cite{FT}, Futer and Thomas constructed various lattices in the automorphism group of Bourdon's building as fundamental groups of simple complexes of groups over $Y$.  Denote by $Y'$ the first barycentric subdivision of $Y$, with vertex set $V(Y')$ and edge set $E(Y')$.  Each $a \in E(Y')$ corresponds to cells $\tau \subset \s$ of $Y$, and so may be oriented from $i(a) = \s$ to $t(a) = \tau$.  Two edges $a$ and $b$ of $Y'$ are \emph{composable} if $i(a) = t(b)$, in which case there exists an edge $c = ab$ of $Y'$ such that $i(c) = i(b)$, $t(c) = t(a)$ and $a$, $b$ and $c$ form the boundary of a triangle in $Y'$.  (Note that the sets $V(Y')$ and $E(Y')$ satisfy the definition of scwol.)  A \emph{complex of groups over $Y$} assigns a local group $G_\s$ to each $\s \in V(Y')$, and defines monomorphisms $\psi_a: G_\s \to G_\tau$ whenever $a \in E(Y')$ with $i(a) = \s$ and $t(a) = \tau$.  We will call the local groups \emph{face}, \emph{edge} and \emph{vertex} groups as appropriate.  See \cite[Section 2.3]{FT} for more details concerning complexes of groups over polygonal complexes such as $Y$.

Now assume that $F \geq 8$ is a multiple of $8$ and let $g \geq 2$ be such that $F = \frac{8(g-1)}{n-4}$.  Let $Y$ be the tessellation of a closed surface $S_g$ of genus $g$ by $F$ copies of $K$ given in the proof of \cite[Proposition 6.1]{FT}, where this construction is carried out to ensure that all gluings are type-preserving.  Since $K$ is right-angled, at each vertex of $Y$ two (local) geodesics intersect.  In fact, since the gluings are type-preserving, there are no self-intersecting geodesics, each geodesic has a well-defined type $i \in \Z/n\Z$ and two geodesics of types $i, j \in \Z/n\Z$ intersect only if $j = i \pm 1 \pmod n$.  If $h_1, \ldots, h_N$ are the geodesics of this tiling, then by \cite[Proposition 6.1]{FT}, the geodesics $h_k$ may be oriented so that $\sum [h_k] = 0 \in H_1(S_g)$.  We fix this orientation on the geodesics $h_1,\ldots,h_N$.

\begin{prop}\label{p:covering RA Ipq} Let $G$ be a complete Kac--Moody group of rank $n \geq 2$ with generalised Cartan matrix $A = (a_{ij})$, defined over the finite field $\F_q$ where $q = p^h$ and $p$ is prime.
Assume that if $a_{ij}a_{ji} \geq 4$ then $|a_{ij}|, |a_{ji}| \geq 2$, that the Weyl group $W$ of $G$ satisfies  $W = W_n$ for $n \geq 5$ and that $q \equiv 1 \pmod 4$.

For each $i \in \Z/n\Z$, let $A_i \leq M_i$ be the group $A_i \cong C_r$ from Lemma~\ref{l:finite}\eqref{i:finite q=1} above, where $r = (q+1)/2$.  Let $\G(Y)$ be the simple complex of groups over $Y$ with:
\begin{itemize}
\item all face groups trivial;
\item all edge groups for edges contained in a geodesic of type $i$ the group $A_i$; 
\item all vertex groups at intersections of geodesics of type $i$ and type $j$ the group $A_i \times A_j$; and
\item all monomorphisms the natural inclusions.
\end{itemize}
Then the fundamental group of $\G(Y)$ embeds as a cocompact lattice in $G$ which has $F$ orbits of chambers.
\end{prop}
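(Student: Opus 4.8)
The plan is to view $\G(Y)$ as a complex of groups $\G(\cY)$ over the scwol $\cY=(V(Y'),E(Y'))$ associated to the barycentric subdivision $Y'$, and to build a covering of complexes of groups $\Phi\colon\G(\cY)\to G(\cK)$ over the unique type-preserving morphism of scwols $f\colon\cY\to\cK$. Here $f$ sends each face-centre of $Y'$ to $\sigma_\emptyset$, the barycentre of each type-$i$ edge of $Y$ to $\sigma_i$, and each vertex of $Y$ lying on geodesics of types $i$ and $j$ to $\sigma_{\{i,j\}}$; that $f$ is nondegenerate is immediate from the description of $K$ as an $n$--gon with sides cyclically of types $1,\dots,n$, together with the fact that all gluings in $Y$ are type-preserving. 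Once such a covering is constructed, Corollary~\ref{c:coverings} gives that $\G:=\pi_1(\G(\cY))$ embeds in $G$ as a cocompact lattice with $\G\bs\cX\cong\cY$; since the chambers of $\Delta$ correspond to the type-$\emptyset$ vertices of $\cX$ and $\cY$ has exactly $F$ such vertices, one per copy of $K$ in $Y$, the lattice $\G$ has $F$ orbits of chambers, as required.

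For the local maps of $\Phi$ I take the evident inclusions: the trivial group into $B$ at each face-centre, the inclusion $A_i\hookrightarrow P_i$ at the barycentre of a type-$i$ edge, and, at a vertex on geodesics of types $i$ and $j$, the inclusion $A_i\times A_j\hookrightarrow P_{\{i,j\}}$ furnished by Lemma~\ref{l:i:2}\eqref{i:A_i commute} (applicable since $m_{ij}=2$). These are injective. The substance lies in choosing the elements $\phi(a)\in G_{t(f(a))}$. For an edge $a$ of $\cY$ from a face-centre to the barycentre of a type-$i$ edge $e$ of $Y$, I put $\phi(a)=g_{m,e}\in M_i\le P_i$, where $m$ is the face in question; the $g_{m,e}$ are to be chosen so that, for the two faces $m,m'$ meeting along $e$, the chambers $g_{m,e}B$ and $g_{m',e}B$ lie in the two distinct $A_i$--orbits on $\cC_i=P_i/B$. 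This makes sense because $M_i$ acts transitively on $\cC_i$ while, by Lemma~\ref{l:finite}\eqref{i:finite q=1}, $A_i$ has exactly two orbits on $\cC_i$, of size $r=(q+1)/2$.

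For an edge $a$ of $\cY$ from the barycentre of a type-$i$ edge $e$ to a vertex $\tau$ on geodesics of types $i$ and $j$, the cocycle condition for $\Phi$ (both $\G(\cY)$ and $G(\cK)$ being simple) forces $\phi(a)=h_{e,\tau}\in P_{\{i,j\}}$ to satisfy $h_{e,\tau}\,g_{m,e}=h_{e',\tau}\,g_{m,e'}$ whenever $e$ and $e'$ are the type-$i$ and type-$j$ edges of a common face $m$ meeting at $\tau$, and in addition the commuting square of Definition~\ref{d:morphism} requires $h_{e,\tau}\in C_{P_{\{i,j\}}}(A_i)$. Since $[M_i,M_j]=1$ by Lemma~\ref{l:M_J 2}, we have $M_j\le C_{P_{\{i,j\}}}(A_i)$ and $M_i\le C_{P_{\{i,j\}}}(A_j)$, which provides enough room to solve these equations locally around each vertex $\tau$. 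The main obstacle of the proof is to make all these local choices of the family $\{g_{m,e},h_{e,\tau}\}$ globally consistent: this is exactly what the homological condition $\sum_k[h_k]=0$ in $H_1(S_g)$ secures, since it forces the ``holonomy'' obtained by reading the data $g_{m,e}$ around the link of each vertex, and around each loop of $Y$, to be trivial, so that a consistent family exists. I would carry this out following the argument in \cite[Proposition~6.1]{FT}, from which the orientations of the geodesics $h_1,\dots,h_N$ were taken.

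It then remains to verify the covering condition \eqref{i:covbijection} of Definition~\ref{d:covering}. There are three types of coset bijection to check: over the edge of $\cK$ ending at $\sigma_i$, over the edge ending at $\sigma_{\{i,j\}}$ from $\sigma_i$ (the case from $\sigma_j$ being symmetric), and over the edge ending at $\sigma_{\{i,j\}}$ from $\sigma_\emptyset$. In each case Lemma~\ref{l:RA index}, together with Lemma~\ref{l:finite}\eqref{i:finite q=1} (giving $2r=q+1$), shows that the two finite sets involved have equal cardinality ($q+1$, $q+1$ and $(q+1)^2$ respectively), so it suffices to prove injectivity. At a type-$i$ edge-barycentre this follows from $A_i\cap B^g=1$ for all $g\in G$ (Lemma~\ref{l:finite}\eqref{i:finite q=1}) together with the fact that the two faces along the edge lie in distinct $A_i$--orbits on $\cC_i$. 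At a vertex $\tau$ on geodesics of types $i,j$, injectivity reduces—using that $A_i\times A_j$ and its relevant subgroups have order coprime to $p$ and to $|T|$, so meet the appropriate conjugates of $P_i$ and of $B$ trivially—to the key combinatorial point that the four faces around $\tau$ are sent to the four distinct $A_i\times A_j$--orbits on $\cC_{\{i,j\}}\cong\cC_i\times\cC_j$; this holds because consecutive faces around $\tau$ share exactly one of their two edges at $\tau$, hence differ in exactly one of the two orbit-coordinates. Thus $\Phi$ is a covering, and the proposition follows from Corollary~\ref{c:coverings}.
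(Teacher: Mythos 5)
Your proposal follows essentially the same route as the paper: the same morphism $f\colon\cY\to\cK$, the same local maps, elements $\phi(a)$ built from elements $g_i\in M_i$ moving between the two $A_i$--orbits on $\cC_i$, and the same counting-plus-injectivity verification of the coset bijections. The one step you leave open --- the global consistency of the family $\{g_{m,e},h_{e,\tau}\}$ --- is handled in the paper more directly than you anticipate: one simply sets $\phi(a)=1$ or $g_i$ according to whether $i(a)$ lies to the left or to the right of the oriented type-$i$ geodesic through $t(a)$, and extends to compositions by $\phi(ab)=\phi(a)\phi(b)$, with well-definedness coming from $[M_i,M_j]=1$ (Lemma~\ref{l:M_J 2}); there is no holonomy obstruction to kill, and the condition $\sum_k[h_k]=0$ is not what makes this work (it is inherited from the choice of tessellation in \cite{FT}), so your appeal to it is a slight misattribution rather than an error in the construction itself.
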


\begin{proof}  As in previous results, we construct a covering of complexes of groups $\Phi$, and the conclusion then follows from Corollary \ref{c:coverings} above.  By a slight abuse of notation, this covering will be from $\G(Y)$ to $G(\cK)$.

By Lemma \ref{l:i:2}\eqref{i:A_i commute} above, for each $i \in \Z/n\Z$ and $j = i \pm 1 \pmod n$, we have $\la A_i, A_j \ra = A_i \times A_j$.  We may thus define the local maps in $\Phi$ to be the inclusions $1 \hookrightarrow B$ of face groups, $A_i \hookrightarrow P_i$ of edge groups and $A_i \times A_j \hookrightarrow P_{ij}$ of vertex groups.  

The elements $\phi(a)$ for $a \in E(Y')$ are defined as follows.  Since the group $M_i$ acts transitively on $\cC_i$, we may choose $g_i \in M_i$ so that $\cK$ and $g_i\cK$ represent the $2$ orbits of $A_i$ on $\cC_i$.  By Lemma~\ref{l:finite}\eqref{i:finite q=1}, we have $A_i \cap B = A_i \cap B^{g_i} = 1$.  Now for each (oriented) geodesic $h$ of type $i$ in $Y$, consider the set of edges $a \in E(Y')$ such that $t(a)$ is in $h$ and $a$ is not the product of two composable edges.  If $i(a)$ is to the left of $h$, put $\phi(a) = 1$ and if $i(a)$ is to the right of $h$, put $\phi(a) = g_i\in M_i$.  Then for each pair of composable edges $(a,b)$, we define $\phi(ab) = \phi(a)\phi(b)$.  This is well-defined by our previous choices of $\phi(a)$ and the fact that $[M_i,M_{i+1}] = 1$ from Lemma \ref{l:M_J 2} above.  It is then easy to check that $\Phi$ is a morphism of complexes of groups.

For each $i \in I$, let $b_i$ be the edge of $\cK$ from $\s_\emptyset$ to $\sigma_i$.  The proof that $\Phi_{\s_i/b_i}$ is a bijection is the same as in Proposition \ref{p:covering RA free} above.  To complete the proof that $\Phi$ is a covering, fix $\s \in f^{-1}(\s_{ij})$, where $j = i \pm 1 \pmod n$.   

Let $b$ be the edge of $\cK$ from $\s_i$ to $\s_{ij}$ and consider
\[ \Phi_{\s/b}: \left(\coprod_{l =1,2} (A_i \times A_j)/\psi_{a_l}(A_i) \right) \to P_{ij}/P_i \]
which is induced by $g \mapsto \phi_{\s_{ij}}(g)\phi(a_l) = g\phi(a_l)$, where $a_1$ and $a_2$ are the two edges of $E(Y')$ with initial vertex of type $i$ and terminal vertex $\s$.  Without loss of generality $\phi(a_1) = 1$ and $\phi(a_2) = g_j$.  By our choice of $g_j$,  for all $g \in A_i \times A_j$ the cosets $gP_i$ and $gg_jP_i$ are distinct.  It now suffices to show that the elements of $A_j$ and of $A_jg_j$ form a transversal for $P_{ij}/P_i$.  This is immediate from the observation that the group $A_j$ acts fixed-point-freely on $P_{ij}/P_i$, with $2$ orbits, one represented by $P_i$ and one by $g_jP_i$. 

Now let $b_\emptyset$ be the edge of $\cK$ from $\s_\emptyset$ to $\s_{ij}$ and consider
\[ \Phi_{\s/b_\emptyset}: \left(\coprod_{l =1}^4 (A_i \times A_j)/\psi_{c_l}(1) \right) \to P_{ij}/B \]
which is induced by $g \mapsto \phi_{\s_{ij}}(g)\phi(c_l) = g\phi(c_l)$, where $c_1, c_2, c_3, c_4$ are the $4$ edges of $E(Y')$ with initial vertex of type $\emptyset$ and terminal vertex $\s$.  Then without loss of generality $\phi(c_1) = 1$, $\phi(c_2) = g_i$, $\phi(c_3) = g_j$ and $\phi(c_4) = g_ig_j$.  The proof that $\Phi_{\s_{ij},b_\emptyset}$ is a bijection follows from the observation that a transversal for $P_{ij}/B$ may be obtained from the transversals for $P_i/B$ and $P_{ij}/P_i$ in previous paragraphs.

Thus we have constructed a covering of complexes of groups $\Phi:\G(\cY) \to G(\cK)$.
\end{proof}

This completes the proof of Theorem \ref{t:RA}.

%---------------------------------------------------------------------------------------
\subsection{Discussion of case $q \equiv 1 \pmod 4$}\label{s:discuss q=1}
%---------------------------------------------------------------------------------------

We now briefly discuss the reasons why, when $q \equiv 1 \pmod 4$, we do not know whether all $G$ as in Theorem \ref{t:RA} admit a cocompact lattice.  Our discussion here is mostly independent of whether the Weyl group of $G$ is right-angled.

In the affine case $G = \SL_n(\F_q((t)))$, where the Weyl group is of type $\tilde{A}_{n-1}$, a lattice $\G < G$ is cocompact if and only if it does not contain any $p$--elements.  See \cite[Proposition 24]{CRT} for a proof of this analogue of Godement's Cocompactness Criterion.  If $G$ is a complete Kac--Moody group of rank $2$ over $\F_q$, we conjectured in \cite{CT} that a lattice $\G < G$ is cocompact if and only if it does not contain $p$--elements. 

For $G$ of higher rank, it seems reasonable to make the same conjecture, and thus to concentrate the search for cocompact lattices on groups which do not contain $p$--elements.  Now, by the same arguments as in \cite[Section 5]{CT}, if $q \equiv 1 \pmod 4$ there is no finite group $A_i \leq P_i$ of order coprime to $p$ such that $A_i$ acts transitively on $P_i/B$.  Hence when $q \equiv 1 \pmod 4$, we cannot hope to construct a chamber-transitive cocompact lattice using stabilisers of order coprime to $p$.  Thus in the quotient, each panel must be contained in at least $2$ distinct chambers.   In Sections \ref{s:proof RA free} and \ref{s:proof RA Ipq} above, where we have obtained a cocompact lattice in some cases with $q \equiv 1 \pmod 4$, we were able to construct a ``reasonable" finite quotient space in which each panel is contained in exactly $2$ chambers.  We do not know how to construct a suitable finite quotient space for general right-angled $W$ when $q \equiv 1 \pmod 4$.

%---------------------------------------------------------------------------------------
\subsection{Proof of the Surface Subgroup Corollary}\label{s:surface}
%---------------------------------------------------------------------------------------

In this section we prove the Surface Subgroup Corollary, which was stated in the introduction.  The case $p=2$ is established in Proposition \ref{p:p=2 surface} below, and the remainder of the Corollary in Proposition \ref{p:q=1 surface}. 

We first note the following.

\begin{lemma}  Let $G$ be a complete Kac--Moody group of rank $n \geq 3$, with Weyl group $W$.  If $W$ is word-hyperbolic then any cocompact lattice in $G$ is word-hyperbolic.
\end{lemma}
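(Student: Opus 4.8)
The plan is to use the characterisation of cocompact lattices recalled in the introduction, namely that $\G < G$ is a cocompact lattice if and only if $\G$ acts on the Davis realisation $X$ cocompactly with finite vertex stabilisers, together with the Milnor--Svarc lemma and the fact that $X$ is quasi-isometric to $W$ when $W$ is word-hyperbolic.

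First I would recall that $X$ is a complete $\CAT(0)$ (indeed, in the word-hyperbolic Weyl group case, Gromov-hyperbolic) geodesic space on which $G$ acts properly and cocompactly: by the construction in Section~\ref{s:Davis}, $X$ is built from copies of the compact chamber $K$ glued along mirrors, $G$ acts on $X$ with compact quotient $K$ and compact (hence, for a discrete subgroup, finite) vertex stabilisers. Next, the key geometric input is that the Davis realisation $X$ with its natural piecewise-Euclidean (or piecewise-hyperbolic) metric is quasi-isometric to the Cayley graph of $W$: this is standard Davis-complex theory (see \cite{D}), since $X$ admits a cocompact isometric action of the chamber-transitive group with the same combinatorial structure as the Davis complex of $(W,S)$, and the Davis complex is quasi-isometric to $W$. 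Hence, by a theorem of Gromov, if $W$ is word-hyperbolic then $X$ is a Gromov-hyperbolic metric space.

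Now let $\G < G$ be any cocompact lattice. By the characterisation above, $\G$ acts on $X$ cocompactly with finite vertex stabilisers; since $X$ is built from finitely many $\G$-orbits of cells each of which is compact, this action is in fact properly discontinuous and cocompact by isometries on the proper geodesic space $X$. Applying the Milnor--Svarc lemma, $\G$ is finitely generated and quasi-isometric to $X$. Since $X$ is Gromov-hyperbolic and Gromov-hyperbolicity is a quasi-isometry invariant of geodesic spaces, $\G$ is a word-hyperbolic group, as required.

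I expect the main obstacle to be making precise the assertion that $X$ is quasi-isometric to $W$, i.e.\ that the Davis realisation of the building $\Delta$ for $G$ is quasi-isometric to the Davis complex of $(W,S)$ and hence to $W$ itself. The cleanest route is to observe that the apartments of $\Delta$ are copies of the Coxeter complex of $(W,S)$, that the Davis realisation of a single apartment is a copy of the Davis complex of $W$, and that the retraction of $X$ onto an apartment centred at a chamber is Lipschitz and distance-nonincreasing while an apartment embeds isometrically on chambers; combined with the local finiteness and cocompactness of $G$ acting on $X$, one gets the required quasi-isometry $X \simeq W$. Alternatively, since $\G$ acts cocompactly with finite vertex stabilisers, one could invoke directly that any such $\G$ is quasi-isometric to $X$ and then argue hyperbolicity of $X$ from hyperbolicity of the $\CAT(0)$ space $X$ once we know its combinatorial structure matches that of a word-hyperbolic Davis complex. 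Either way, the one genuinely technical point is the passage from ``$W$ word-hyperbolic'' to ``$X$ Gromov-hyperbolic'', and everything else is a routine application of Milnor--Svarc together with Corollary~\ref{c:coverings}.
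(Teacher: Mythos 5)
Your overall skeleton agrees with the paper's: establish that the Davis realisation $X$ is Gromov-hyperbolic when $W$ is word-hyperbolic, then apply Milnor--Svarc to the proper, cocompact action of the lattice on $X$. That second half of your argument is fine. The problem is the geometric input you propose for the first half: the claim that $X$ is quasi-isometric to $W$ (equivalently, to the Davis complex of a single apartment) is false. The building is strictly ``thicker'' than an apartment: already in rank $2$ the Davis realisation is a $(q+1)$-regular tree while the apartment is a line, and for $W = W_n$ the realisation is Bourdon's building, whose boundary is a Menger curve, while the apartment is $\mathbb{H}^2$ with circle boundary. Your suggested justification does not repair this: the retraction of $X$ onto an apartment centred at a chamber is $1$-Lipschitz and the apartment is isometrically embedded, but that only exhibits the apartment as a Lipschitz retract of $X$; the retraction collapses the entire building onto one apartment and is nowhere near coarsely injective, so it is not a quasi-isometry. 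Thus the ``one genuinely technical point'' you flag cannot be resolved along the lines you sketch, and your alternative phrasing at the end (``hyperbolicity of $X$ \ldots once we know its combinatorial structure matches that of a word-hyperbolic Davis complex'') has the same gap.

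What is true, and what the paper invokes, is the weaker statement that the Davis realisation of a building of type $(W,S)$ is Gromov-hyperbolic if (and only if) $W$ is word-hyperbolic; this is \cite[Corollary 18.3.10]{D}, and its proof goes through the $\CAT(0)$ geometry of $X$ and Moussong's condition rather than through any quasi-isometry between $X$ and $W$. If you replace your quasi-isometry claim with a citation of that result, the remainder of your argument --- finite cell stabilisers and finitely many orbits of compact cells give a properly discontinuous cocompact isometric action, Milnor--Svarc gives $\Gamma$ quasi-isometric to $X$, and hyperbolicity is a quasi-isometry invariant of geodesic spaces --- is correct and coincides with the paper's (very brief) proof.
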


\begin{proof}  This follows from \cite[Corollary 18.3.10]{D} and the fact that cocompact lattices in $G$ act properly discontinuously and cocompactly on the Davis realisation $X$.
\end{proof}

We also note that when $p = 2$ our Surface Subgroup Corollary applies to some cases in which $W$ is \emph{not} word-hyperbolic (see Moussong's Theorem, \cite[Corollary 12.6.3]{D}).  

\begin{prop}\label{p:p=2 surface}  Let $G$ be as in Proposition \ref{p:covering RA free}.  Assume that $p = 2$ and that $W$ has a special subgroup isomorphic to $W_{n'}$ for some $n' \geq 5$.  Let $\G$ be the cocompact lattice in $G$ constructed in Section \ref{s:p=2} above.  Then $\G$ has a surface subgroup.
\end{prop}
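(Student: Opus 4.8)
The plan is to identify $\G$ explicitly as a graph product of finite cyclic groups, to locate inside it the graph product over a long induced cycle coming from the special subgroup $\cong W_{n'}$, and then to quote the surface-subgroup theorem for graph products alluded to in the introduction.

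First I would unwind the construction of $\G$ from Section~\ref{s:p=2}. There $\G = \pi_1(\G(\cK))$, where $\G(\cK)$ is the simple complex of groups over the scwol $\cK$ of the chamber $K$, with local group $A_J = \prod_{j\in J} A_j$ at $\sigma_J$ for each $J \in \cS$ (each $A_i \cong C_{q+1}$, of odd order since $p = 2$), all monomorphisms inclusions, and $A_\emptyset = 1$. Since $K$ is a cone, the geometric realisation of $\cK$ is contractible, hence simply connected, and therefore $\pi_1(\G(\cK))$ is the direct limit of the system $\{A_J, \text{inclusions}\}_{J \in \cS}$. Because $W$ is right-angled, $\cS$ is exactly the set of simplices of the nerve $L$, and $L$ is the flag complex on its $1$-skeleton $\Lambda$, where $\{i,j\}$ is an edge of $\Lambda$ if and only if $m_{ij} = 2$; so $\cS$ is the set of cliques of $\Lambda$. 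The colimit of $\{\prod_{j\in J} A_j\}$ taken over the poset of cliques of $\Lambda$ is, by the standard presentation of a graph product as this colimit, the graph product of the cyclic groups $A_i$ over $\Lambda$. Hence $\G$ is the graph product of $n$ copies of $C_{q+1}$ over $\Lambda$.

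Next I would restrict to the special subgroup. By hypothesis $W$ has a special subgroup $W_{J_0} \cong W_{n'}$ for some $n' \geq 5$; since in $W_{n'}$ one has $m_{ij} = 2$ precisely for consecutive $i,j$ and $m_{ij} = \infty$ otherwise, the induced subgraph of $\Lambda$ on $J_0$ is a cycle of length $n'$ — in particular an induced cycle of $\Lambda$ of length at least $5$. By the standard fact that the subgroup of a graph product generated by the vertex groups indexed by an induced subgraph is again the graph product over that subgraph, $\G_{J_0} := \la A_j \mid j \in J_0 \ra \leq \G$ is the graph product of $n'$ copies of $C_{q+1}$ over this $n'$-cycle, all vertex groups nontrivial since $q + 1 \geq 3$. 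Finally I would apply the result on graph products invoked in the introduction — that a graph product of nontrivial groups whose defining graph contains an induced cycle of length at least $5$ contains a subgroup isomorphic to the fundamental group of a closed orientable hyperbolic surface — to $\G$ (equivalently, to $\G_{J_0}$). This produces a surface subgroup of $\G_{J_0} \leq \G$, and hence of $\G$.

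I expect the points requiring care to be, first, the identification in the opening step: extracting from the Bridson--Haefliger formalism that $\pi_1$ of the simple complex of groups $\G(\cK)$ over the contractible scwol $\cK$ is exactly the colimit of its local groups (developability of $\G(\cK)$, needed elsewhere, is already in hand from Section~\ref{s:p=2}, so here only this abstract identification is used), and matching that colimit with the usual presentation of a graph product; and second, that because $p = 2$ the vertex groups $C_{q+1}$ have \emph{odd} order, so there is no homomorphism $\G \to W_{n'}$ along which one could merely pull back a surface subgroup of the Coxeter group $W_{n'}$. It is precisely for this reason that $p = 2$ is singled out in the statement, and a surface-subgroup result for graph products with arbitrary nontrivial vertex groups is genuinely needed rather than the classical fact about right-angled Coxeter groups.
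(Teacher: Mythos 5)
Your proposal is correct and follows essentially the same route as the paper: identify $\G$ as the graph product of the cyclic groups $A_i$ over the $1$--skeleton of the nerve, use Green's thesis to embed the graph product over the induced $n'$--cycle coming from the special subgroup $W_{n'}$, and conclude via the Holt--Rees/Kim surface-subgroup theorem for graph products with nontrivial vertex groups. Your extra care in deriving the graph-product presentation as the colimit of local groups over the contractible scwol $\cK$ is a welcome elaboration of a step the paper treats as immediate.
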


\begin{proof}  By construction and the definition of the fundamental group of a complex of groups, the lattice $\G$ has presentation with generating set the finite cyclic groups $A_i$ for $1 \leq i \leq n$, and relations the relations in the $A_i$ together with  $[A_i,A_j] = 1$ whenever $m_{ij} = 2$.  In other words, the lattice $\G$ is the graph product of the groups $A_i$ over the graph which is the $1$--skeleton of the nerve~$L$.

By assumption, $W$ has a special subgroup isomorphic to $W_{n'}$ for some $n' \geq 5$, and so the $1$--skeleton of $L$ contains a full subgraph which is a cycle on $n'$ vertices, say $s_{i_1},\ldots,s_{i_{n'}}$, in cyclic order.  Let $\G'$ be the graph product of the groups $A_{i_1},\ldots,A_{i_{n'}}$ over this $n'$--cycle.  Then $\G'$ embeds in $\G$ (see Green \cite{G}).  Thus it suffices to show that $\G'$ has a surface subgroup.

Since $n' \geq 5$ and $A_{i_1}, \ldots,A_{i_{n'}}$ are nontrivial, the fact that $\G'$ has a surface subgroup follows from either Holt--Rees \cite[Theorem 3.1]{HR} or, independently, Kim \cite[Corollary 6]{K}.
\end{proof}

\begin{prop}\label{p:q=1 surface}  Let $G$ be as Proposition \ref{p:covering RA Ipq} above, with $W = W_n$ for $n \geq 5$.
\begin{enumerate}
\item If $q \equiv 1 \pmod 4$ then the cocompact lattice $\G$ constructed in Section \ref{s:proof RA Ipq} above has a surface subgroup.
\item If $n \geq 6$ then for all $q$, every cocompact lattice in $G$ has a surface subgroup.
\end{enumerate}
\end{prop}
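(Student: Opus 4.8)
The plan is to handle the two statements by separate means: the first is a short retraction argument exploiting the complex-of-groups description of the specific lattice $\G$, while the second, which must hold for \emph{every} cocompact lattice, is where the real work is.

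For the first statement, recall from Proposition~\ref{p:covering RA Ipq} that $\G$ is the fundamental group of the simple complex of finite groups $\G(Y)$ over a closed orientable surface $Y$ of genus $g\geq 2$ (the genus chosen so that $F=\tfrac{8(g-1)}{n-4}$). I would let $\mathbf 1(Y)$ be the trivial complex of groups over $Y$, so that $\pi_1(\mathbf 1(Y))=\pi_1(Y)$, which for $g\geq 2$ is exactly the fundamental group of a compact orientable hyperbolic surface. The idea is to produce two morphisms of complexes of groups over $\mathrm{id}_Y$: a \emph{collapse} $\rho\colon\G(Y)\to\mathbf 1(Y)$, with each local map the trivial homomorphism and each $\phi(a)$ trivial; and an \emph{inclusion} $\iota\colon\mathbf 1(Y)\to\G(Y)$, with each local map the inclusion of the trivial subgroup and each $\phi(a)$ trivial. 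One checks immediately from Definition~\ref{d:morphism} that both are morphisms (every identity to be verified lives inside a trivial group) and that $\rho\circ\iota=\mathrm{id}_{\mathbf 1(Y)}$; hence $\rho_*\circ\iota_*=\mathrm{id}$ on fundamental groups, so $\iota_*\colon\pi_1(Y)\hookrightarrow\G$ is injective and its image is a surface subgroup. Note this uses only that $Y$ is a closed orientable surface of genus $\geq 2$, and nothing about the local groups.

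For the second statement, let $\G$ be an arbitrary cocompact lattice in $G$, with $W=W_n$ and $n\geq 6$. Since the nerve of $W_n$ is an $n$--cycle with $n\geq 5$, the group $W_n$ is word-hyperbolic, so $\G$ is word-hyperbolic by the Lemma above, and $\G$ acts properly and cocompactly on $X$, which here is Bourdon's building: a $2$--dimensional $\CAT(-1)$ complex whose chambers are right-angled hyperbolic $n$--gons and each of whose panels lies in $q+1$ chambers. The action of $\G$ on $\cX$ induces a covering of complexes of groups $\G(\cZ)\to G(\cK)$ with $\cZ=\G\bs\cX$ finite and all local groups of $\G(\cZ)$ finite. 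The plan is then to construct a complex of finite groups $\G'(Y)$ over a closed orientable surface $Y$ of genus $\geq 2$ together with a covering $\G'(Y)\to\G(\cZ)$; composing with $\G(\cZ)\to G(\cK)$ and applying Theorem~\ref{t:coverings} exhibits $\pi_1(\G'(Y))$ as a subgroup of $\pi_1(\G(\cZ))=\G$, and then the retraction of the previous paragraph applied to $\G'(Y)$ yields a surface subgroup of $\pi_1(\G'(Y))\leq\G$. Concretely, $Y$ is to be built as a closed surface tessellated by $n$--gons in which, around each edge, two of the $q+1$ chambers of $X$ above that edge are selected, the selections being made compatibly at the vertices so as to satisfy the coset-bijection condition in Definition~\ref{d:covering}.

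The main obstacle is performing this construction for a \emph{general} lattice: the $\G$--stabiliser of a panel of $X$ can act transitively on the $q+1$ chambers through it (as for the chamber-transitive lattices of Theorem~\ref{t:RA}\eqref{i:p=2, q=3, RA}), obstructing a $\G$--equivariant choice of two chambers per edge, and the link data at the vertices of $\G\bs\cX$ must be reconciled with the requirement that $Y$ be a closed surface of negative Euler characteristic. I expect the hypothesis $n\geq 6$, rather than merely $n\geq 5$, to be precisely what leaves enough room --- by a combinatorial Gauss--Bonnet or Euler-characteristic count for the right-angled $n$--gon, whose angle sum $\tfrac{n\pi}{2}$ leaves surplus $\tfrac{(n-4)\pi}{2}$ --- to close up $Y$ with $\chi(Y)<0$ while respecting the covering conditions; establishing this, perhaps after first replacing $\G$ by a suitable finite-index or finitely generated subgroup, is the crux.
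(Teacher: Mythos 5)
For part~(1) your retraction argument is correct and is in substance exactly what the paper means by ``the same arguments as in \cite{FT}'': since $\G(Y)$ is a \emph{simple} complex of groups over the closed orientable surface $S_g$ with $g\geq 2$, the morphism to the trivial complex of groups over $Y$ (all local maps trivial, all $\phi(a)=1$) splits the natural inclusion, so $\pi_1(S_g)\hookrightarrow\pi_1(\G(Y))=\G$ and the image is a surface subgroup. No issue there.

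Part~(2), however, has a genuine gap. You propose to build, for an \emph{arbitrary} cocompact lattice $\G<G$, a complex of finite groups over a closed hyperbolic surface covering $\G\bs\cX$, but you explicitly leave this construction as ``the crux,'' and the obstacles you name (panel stabilisers acting transitively on the $q+1$ chambers through a panel, closing up the surface compatibly at vertices) are real and unresolved; in effect you are attempting to reprove the hard part of \cite{FT} uniformly over all lattices, and nothing in your sketch shows this can be done. The paper's route is different and essentially a citation: by the Bass--Lubotzky characterisation any cocompact lattice in $G$ is a cocompact lattice in $\Aut(X)$, and \cite[Corollary 1.3]{FT} asserts that for $n\geq 6$ \emph{every} cocompact lattice in $\Aut(X)$ has a surface subgroup. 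The role of the hypothesis $n\geq 6$ there is not the Euler-characteristic surplus you suggest; it is Haglund's theorem that all uniform lattices in the automorphism group of Bourdon's building $I_{p,q}$ are commensurable when $p\geq 6$. Given that, one transports the surface subgroup of the one explicitly constructed lattice to any other cocompact lattice through a common finite-index subgroup (a finite-index subgroup of a closed hyperbolic surface group is again one). Without this commensurability input, or a genuine execution of your proposed surface-covering construction, part~(2) is not proved.
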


\begin{proof}  The construction of $\G$ in Section \ref{s:proof RA Ipq} uses a simple complex of groups from Futer--Thomas \cite{FT}, so that by the same arguments as in \cite{FT}, the group $\G$ has a surface subgroup.   

Now suppose $n \geq 6$ and that $q$ is any power of a prime.  Any cocompact lattice $\G < G$ is also a cocompact lattice in $\Aut(X)$, by the characterisation of cocompact lattices stated in the introduction.  By \cite[Corollary 1.3]{FT}, every cocompact lattice in $\Aut(X)$ has a surface subgroup.  The result follows.
\end{proof}

%%%%%%%%%%%%%%%%%%%%%%%%%%%%%%%
\section{Proof of Theorem \ref{t:FP}}\label{s:proof FP}
%%%%%%%%%%%%%%%%%%%%%%%%%%%%%%%

We now prove Theorem \ref{t:FP}.  
Throughout this section $G$ is a complete Kac--Moody group of rank $n \geq 2$ with generalised Cartan matrix $A = (a_{ij})$, defined over the finite field $\F_q$ where $q = p^h$ and $p$ is prime, such that for all $i \neq j$,  if $a_{ij}a_{ji} \geq 4$ then $|a_{ij}|, |a_{ji}| \geq 2$.  

We begin with an elementary group-theoretic observation.

\begin{lemma}\label{l:transversals}  Suppose $U \leq V \leq H$ are groups.  Choose transversals $\{ c_i \}$ for $H/U$ and $\{ a_j \}$ for $H/V$.  Then for each $j$, the set
\[ B_j := \{ a_j^{-1} c_i \mid c_i U \subset a_jV \} \]
forms a transversal for $V/U$.
\end{lemma}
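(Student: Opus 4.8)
The plan is to verify the two defining properties of a transversal for $V/U$: that the proposed set $B_j$ meets every coset of $U$ in $V$, and that it meets each such coset at most once. Throughout, fix the index $j$ and write $a := a_j$.

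First I would establish surjectivity onto $V/U$. Let $v \in V$. Since $\{c_i\}$ is a transversal for $H/U$, there is a (unique) index $i$ with $vU = c_iU$, so $c_i \in vU \subseteq V$. Then $c_iU \subseteq V = a(a^{-1}V)$; I must check the defining condition $c_iU \subseteq aV$ holds, which is immediate since $c_i \in V$ and $V = aV$ as cosets of $V$ in $H$ — more carefully, $a \in V$ need not hold, so instead I argue: $c_iU \subseteq aV$ iff $c_i \in aV$ iff $c_iV = aV$, and since $c_i \in V$ we have $c_iV = V$; on the other hand $a \in a_jV$ gives $aV = V$ precisely when $a_j \in V$, which again may fail. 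The correct statement is that $v \in V$ and $c_iU = vU$ forces $c_i \in V$, hence $c_iV = V = a_jV$ only if $a_j \in V$. So I should not assume that; rather, I should choose $v$ to begin with as $v = a^{-1}w$ for $w \in aV$, i.e. reparametrise: given an arbitrary element of $V$, write it as $a^{-1}w$ with $w = av \in aV$; pick the index $i$ with $c_iU = wU$; then $c_i \in wU \subseteq aV$, so $c_iU \subseteq aV$ and thus $a^{-1}c_i \in B_j$; and $a^{-1}c_i U = a^{-1}wU = vU$. This shows every coset of $U$ in $V$ is represented by an element of $B_j$.

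Next, injectivity. Suppose $a^{-1}c_i$ and $a^{-1}c_{i'}$ lie in $B_j$ and represent the same coset of $U$ in $V$, i.e. $a^{-1}c_iU = a^{-1}c_{i'}U$. Multiplying on the left by $a$ gives $c_iU = c_{i'}U$, and since $\{c_i\}$ is a transversal for $H/U$ this forces $i = i'$. Hence distinct elements of $B_j$ lie in distinct cosets of $U$ in $V$.

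Combining the two parts, $B_j$ is a set of coset representatives for $V/U$, i.e. a transversal, as claimed. I do not foresee a genuine obstacle here; the only subtlety — and the step I would be most careful about — is the bookkeeping in the surjectivity argument, namely resisting the temptation to assume $a_j \in V$ and instead correctly reparametrising an arbitrary element of $V$ as $a_j^{-1}w$ with $w \in a_jV$ before selecting the representative $c_i$. Everything else is a one-line consequence of the two defining properties of the given transversals $\{c_i\}$ and $\{a_j\}$.
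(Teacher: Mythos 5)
Your proof is correct and follows essentially the same route as the paper: injectivity by cancelling $a_j$ and invoking uniqueness of the $c_i$, and surjectivity by passing from $v \in V$ to the coset $a_j v U = c_i U \subseteq a_j V$. The only difference is presentational — your surjectivity paragraph records a false start before arriving at the right reparametrisation, which you could simply delete.
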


\begin{proof}  Fix $j$.  If $c_iU \subset a_jV$ it is clear that $a_j^{-1}c_i \in V$.  Now suppose that $a_j^{-1}c_i U = a_j^{-1}c_{i'}U$, where $i$ and $i'$ are such that $c_iU$ and $c_{i'}U$ are both contained in $a_jV$.  Then $c_iU = c_{i'}U$, but since $V/U \subset H/U$, this implies that $i = i'$ and hence $a_j^{-1}c_i = a_j^{-1}c_{i'}$.   To complete the proof, we must show that for all $v \in V$, there is an $i$ so that $vU = a_j^{-1}c_iU$ and $c_iU \subset a_jV$.  For this, consider the coset $a_jvU \in H/U$.  Then $a_jvU = c_iU$ for some $i$, by definition of the $c_i$.  It follows that $c_iU = a_jvU \subset a_jV$, as required.  This completes the proof.
\end{proof}

We now assume that $I$ can be partitioned into nonempty spherical subsets $J_1, \ldots, J_N$ ($N \geq 2$), so that for all $j \in J_k$ and $j' \in J_{k'}$ with $k \neq k'$, we have $m_{jj'} = \infty$.  Then as in the statement of Theorem~\ref{t:FP} in the introduction, the Weyl group $W$ 
has a free product decomposition $$W = W_1 * W_2 * \cdots * W_N$$
with each special subgroup $W_k:=W_{J_k}$ spherical.  Hence the nerve $L$ is the disjoint union of simplices $L_1, \ldots, L_N$, where each $L_k$ is the simplex with vertex set $J_k$.  Denote by $K_1,\ldots,K_N$ respectively the cones on the barycentric subdivisions of $L_1, \ldots,L_N$.  Then the  chamber $K$ may obtained by gluing together the cone points of $K_1, \ldots, K_N$.

We next construct a suitable scwol $\cY$.  For this, for each $1 \leq k \leq N$, let $Z_{k}$ be the subcomplex of~$X$ which is the union of all (closed) simplices which contain the vertex $\s_{J_k}$ of the chamber~$K$.  In other words, $Z_{k}$ is the union of the images of the cone $K_k$ in each of the chambers of $X$ which contain $\s_{J_k}$.  Note that by Lemma \ref{l:indexes finite}, for each $1 \leq k \leq N$ the index $$m_k:=|P_{J_k}:B|$$ is finite.  Hence each $Z_{k}$ is a finite simplicial complex containing $m_k$ vertices of type $\emptyset$.  Define $$M := \prod_{k=1}^N m_k < \infty$$ and for $1 \leq k \leq N$ let $$M_k := M/m_k = m_1 m_2 \cdots m_{k-1}m_{k+1} \cdots m_N.$$  For each $1 \leq k \leq N$, let $Y_k$ be the simplicial complex consisting of $M_k$ disjoint copies of $Z_{k}$.  Then each $Y_k$ contains $M$ vertices of type~$\emptyset$.  

Now define $Y$ be the simplicial complex obtained by taking the disjoint union of the $Y_k$, for $1 \leq k \leq N$, and then gluing together collections of $N$ distinct vertices of type $\emptyset$, so that for each $1 \leq k \leq N$, each vertex in $Y$ of type $\emptyset$ is contained in exactly one copy of $Z_{k}$.  By construction, the resulting simplicial complex $Y$ contains $M$ chambers.   Note also that~$Y$ deformation retracts to a finite graph.  

\begin{example}
Suppose $N = 2$.  Then $Y_1$ consists of $m_2$ copies of $Z_{1}$ and $Y_2$ consists of $m_1$ copies of $Z_{2}$.  In $Y$, each of the $M = m_1 m_2$ vertices of type $\emptyset$ is contained in exactly one copy of $Z_1$ and exactly one copy of $Z_2$.  Hence the complex $Y$ deformation retracts to the complete bipartite graph $K_{m_1,m_2}$.  \end{example}

Denote by $\cY$ the scwol associated to $Y$, with edges oriented by inclusion of types.   The following embedding criterion and its corollary establish Theorem \ref{t:FP}.

\begin{prop}\label{p:covering FP}
Let $G$ be as in Theorem \ref{t:FP} and let $\cY$ be the scwol constructed above.  Let $\G(\cY)$ be the trivial complex of groups over $\cY$.  Then the fundamental group of $\G(\cY)$ embeds as a cocompact lattice in $G$, which acts with $M$ orbits of chambers.
\end{prop}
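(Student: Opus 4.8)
The plan is to construct a trivial complex of groups $\G(\cY)$ over the scwol $\cY$ built above, and then exhibit an explicit covering of complexes of groups $\Phi:\G(\cY)\to G(\cK)$ over the natural type-preserving morphism $f:\cY\to\cK$. Since $Y$ was assembled from copies of the complexes $Z_k$, each of which maps to the cone $K_k\subset K$, the only candidate $f$ is the obvious folding map sending each copy of $Z_k$ onto $K_k$ by a type-preserving simplicial map; this $f$ is nondegenerate because at each vertex $\sigma$ of $\cY$ the edges issuing from $\sigma$ biject (by construction of $Y$ from $X$, where $Z_k\subset X$) with the edges of $\cK$ issuing from $f(\sigma)$. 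Once we have such a covering, Corollary \ref{c:coverings} immediately gives that $\pi_1(\G(\cY))$ embeds as a cocompact lattice in $G$ with $\G\bs\cX$ isomorphic to $\cY$; and since $Y$ — hence $\cY$ — deformation retracts to a finite graph, $\pi_1(\G(\cY))$ is the fundamental group of a finite graph, i.e.\ a finitely generated free group, which also yields the final clause of Theorem \ref{t:FP}. The count of $M$ chamber orbits is read off from the $M$ chambers of $Y$.

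Since $\G(\cY)$ is trivial, a morphism $\Phi$ to $G(\cK)$ consists only of the data of an element $\phi(a)\in P_{t(f(a))}$ for each edge $a\in E(\cY)$, subject to the compatibility with compositions; the local maps are the inclusions of the trivial group and all commuting squares hold automatically. The heart of the construction is therefore to choose the $\phi(a)$ so that for every $\sigma\in V(\cY)$ and every edge $b\in E(\cK)$ with $t(b)=f(\sigma)$, the induced map on cosets
\[
\Phi_{\sigma/b}:\ \coprod_{\substack{a\in f^{-1}(b)\\ t(a)=\sigma}} \{*\}\ \longrightarrow\ P_{f(\sigma)}/P_{i(b)}
\]
(sending the point indexed by $a$ to the coset $\phi(a)P_{i(b)}$) is a bijection. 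I would build these elements working up through the dimension of the cells of $Y$. For a vertex $\sigma$ of type $J_k$ with $k$ fixed, $f(\sigma)=\sigma_{J_k}$, and the edges $b$ of $\cK$ with $t(b)=\sigma_{J_k}$ run over $i(b)$ of type $J'\subsetneq J_k$; the fibre $f^{-1}(b)$ over $\sigma$ has exactly $|P_{J_k}:P_{J'}|$ edges (this is where the combinatorics of $Z_k$, which contains $m_k=|P_{J_k}:B|$ chambers, is used, together with Lemma \ref{l:indexes finite}). So I want to choose coset representatives for $P_{J_k}/B$ once and for all for each $k$, assign them bijectively to the $m_k$ chambers (type-$\emptyset$ vertices) of the copy of $Z_k$ containing $\sigma$, and then use Lemma \ref{l:transversals} to deduce, for each intermediate $J'\subsetneq J_k$, transversals for $P_{J_k}/P_{J'}$ compatible with the chosen transversal for $P_{J_k}/B$. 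Setting $\phi(a)$ along the appropriate edge to be the relevant representative makes each $\Phi_{\sigma/b}$ with $b$ ``inside'' a single $Z_k$ a bijection.

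The point requiring care — and the main obstacle — is consistency at the type-$\emptyset$ vertices, where copies of different $Z_k$'s meet: a single vertex $\sigma_\emptyset$ of $\cY$ lies in one copy of each $Z_k$, and for each $k$ there is an edge $a_k$ of $\cY$ from $\sigma_\emptyset$ to the type-$J_k$ vertex of that copy, as well as composable edges $a_k b$ for $b$ an edge of $\cK$ inside $K_k$. The elements $\phi(a_k)$ coming from the $N$ different blocks must be chosen coherently so that the cocycle/composition identity $\phi(a b) = \phi(a)\phi(b)$ (with trivial twisting, since $\G(\cY)$ is trivial and $G(\cK)$ is simple) holds across the gluing, and so that for $b$ the edge of $\cK$ from $\sigma_\emptyset$ into block $k'$, the map $\Phi_{\sigma_\emptyset/b}$ is still a bijection — i.e.\ the $M_{k'}$-fold replication of $Z_{k'}$ exactly absorbs the $M/m_{k'}$ ambiguity. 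This is where the numbers $M=\prod m_k$ and $M_k = M/m_k$ were rigged: one fixes a bijection between the $M$ type-$\emptyset$ vertices of $Y$ and the product set $\prod_k (P_{J_k}/B)$, so that the $k$-th coordinate records which coset of $P_{J_k}/B$ the chamber represents inside its copy of $Z_k$, and this is consistent precisely because gluing is done ``one copy of each $Z_k$ per chamber''. With that identification the verification of \eqref{i:covbijection} of Definition \ref{d:covering} at every vertex and edge, and of the morphism compatibility along composable edges, becomes a bookkeeping exercise rather than a genuinely hard argument. Finally, developability of $G(\cK)$ (it is induced by the $G$-action on $\cX$) together with Theorem \ref{t:coverings} transfers developability and the embedding to $\G(\cY)$, completing the proof.
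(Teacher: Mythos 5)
Your proposal is correct and follows essentially the same route as the paper: a covering $\Phi:\G(\cY)\to G(\cK)$ over the unique type-preserving morphism $f$, built within each copy of $Z_k$ from transversals for the cosets $P_{J_k}/P_{J'}$, with Lemma \ref{l:transversals} handling condition \eqref{i:covbijection} of Definition \ref{d:covering} at the intermediate vertices, and Corollary \ref{c:coverings} concluding. One correction: the ``main obstacle'' you locate at the type-$\emptyset$ vertices is in fact vacuous. Edges of $\cY$ are oriented by inclusion of type, so no edge \emph{terminates} at a vertex of type $\emptyset$; hence there is no map $\Phi_{\sigma_\emptyset/b}$ to check (the covering condition requires $t(b)=f(\sigma)$), and two edges issuing from the same type-$\emptyset$ vertex into different blocks are never composable, so no cross-block cocycle identity arises. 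This is exactly what the paper exploits: distinct copies of $\cZ_k$ and $\cZ_{k'}$ meet only in vertices of type $\emptyset$, so the elements $\phi(a)$ may be defined independently block by block; your global labelling of the $M$ chambers by $\prod_k(P_{J_k}/B)$ is harmless but unnecessary.
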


\begin{proof}  We will construct a covering of complexes of groups $\Phi: \G(\cY) \to G(\cK)$ over the only possible type-preserving morphism $f:\cY \to \cK$.  The conclusion then follows from Corollary~\ref{c:coverings} above, together with the observation that $Y$ contains $M$ chambers.  

All local maps in $\Phi$ are inclusion of the trivial group.  
For each $1 \leq k \leq N$, denote by $\cZ_k$ the scwol associated to $Z_{k}$.  We may identify $\cZ_k$ with a subscwol of $\cX$ or of $\cY$, since by construction $\cY$ contains $M_k$ pairwise disjoint embedded copies of $\cZ_k$.  Now for all $k \neq k'$, each copy of $\cZ_k$ in $\cY$ intersects each copy of $\cZ_{k'}$ in at most finitely many vertices of type $\emptyset$.  Therefore, from the definition of a covering, we may define the elements the elements  $\phi(a) \in G_{t(f(a))}$ by restricting $\Phi$ to each copy of $\cZ_k$ in $\cY$.  
\
Fix $1 \leq k \leq N$ and a copy of $\cZ_k$ in $\cY$.  To simplify notation, put $\cZ = \cZ_k$ and $J = J_k$.  In order to define $\Phi$ on $\cZ$, we regard $\cZ$ as a subscwol of $\cX$.  For each $J' \subsetneq J$, define $m_{J'}:=|P_J:P_{J'}|$ and choose a transversal $h_{1,J'},\ldots,h_{m_{J'},J'}$ for $P_J/P_{J'}$.  Then for each edge $a$ of $\cZ$ with $t(a) = \s_{J}$ and $i(a)$ of type $J'$, we may define $\phi(a) = h_{j,J'} \in P_J$ where $i(a) = h_{j,J'}\s_{J'}$.  

Now suppose $b$ is an edge of $\cZ$ such that $t(b) \neq \s_J$.  Then for some $J'' \subsetneq J' \subsetneq J$, the vertex $t(b)$ is of type $J'$ and $i(b)$ is of type $J''$.  Hence $t(b) = h_{j,J'}\s_{J'}$ and $i(b) = h_{j',J''}\s_{J''}$, where $h_{j,J'}$ is an element of the transversal for $P_J/P_{J'}$ and $h_{j',J''}$ is an element of the transversal for $P_J/P_{J''}$.  By construction of the Davis complex, the existence of an edge from $h_{j',J''}\s_{J''}$ to $h_{j,J'}\s_{J'}$ is equivalent to the containment of cosets $h_{j',J''}P_{J''} \subset h_{j,J'}P_{J'}$.  Thus $h_{j,J'}^{-1}h_{j',J''} \in P_{J'}$ and we may put $\phi(b) = h_{j,J'}^{-1}h_{j',J''} \in P_{J'}$.

It is now straightforward to verify that $\Phi$ is a morphism of complexes of groups.   To show that $\Phi$ is a covering, it suffices to fix $J = J_k$ as above, and 
 consider the preimages of vertices in $\cK$ of nonempty type $J' \subseteq J$. There are two cases, as follows.
\begin{itemize}
\item
Suppose $\s \in f^{-1}(\s_J)$. For each $J' \subsetneq J$, let $b$ be the unique edge of $\cK$ from $\s_{J'}$ to $\s_J$.  Let $\{a_j\}_{j=1}^{m_{J'}}$ be the set of edges of $\cY$ with terminal vertex $\s$ and initial vertex of type $J'$, indexed so that $\phi(a_j) = h_{j,J'}$.  Then by our choice of the elements $h_{j,J'}$ as a transversal for $P_J/P_{J'}$, the map on cosets 
\[ \Phi_{\s/b}:\left(\coprod_{j = 1}^{m_{J'}} 1/\psi_{a_j}(1)\right) \to P_J/P_{J'} \]
 is immediately a bijection. 
\item
Now suppose that $\s \in f^{-1}(\s_{J'})$ where $J' \subsetneq J$.  We will use Lemma \ref{l:transversals} in this case.  If we regard the copy of $\cZ = \cZ_k$ containing $\s$ as a subcomplex of $\cX$, we have that $\s = h_{j,J'}\s_{J'}$ for a unique $1 \leq j \leq m_{J'}$.  For each $J'' \subsetneq J'$, let $b$ be the unique edge of $\cK$ from $\s_{J''}$ to $\s_J$.  Let $\{b_{j'}\}$ be the set of edges of $\cY$ with terminal vertex $\s$ and initial vertex of type $J''$, for $1 \leq j' \leq |P_{J'}:P_{J''}|$.  Then for each $j'$, there is a $1 \leq j'' \leq m_{J''}$ so that $i(b_{j'}) = h_{j'',J''}\s_{J''}$.  By Lemma \ref{l:transversals} and the remarks above on the construction of the Davis complex, it follows that the set $\{ \phi(b_{j'})\} = \{ h_{j,J'}^{-1}h_{j'',J''} \}$ forms a transversal for $P_{J'}/P_{J''}$.
Hence the map on cosets
\[ \Phi_{\s/b}:\left(\coprod_{j'=1}^{|P_{J'}:P_{J''}|} 1/\psi_{b_{j'}}(1)\right) \to P_{J'}/P_{J''}  \]
is a bijection, as required.   \end{itemize}
Thus we have constructed a covering of complexes of groups $\Phi:\G(\cY) \to G(\cK)$.
\end{proof}

\begin{corollary}  The lattice $\G$ constructed in Proposition \ref{p:covering FP} above is a finitely generated free group.
\end{corollary}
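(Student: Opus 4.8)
The plan is to identify $\G$ with the topological fundamental group of the complex $Y$ and then to invoke the observation, already recorded during the construction of $Y$ above, that $Y$ deformation retracts onto a finite graph.

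First I would recall that $\G(\cY)$ is the \emph{trivial} complex of groups over $\cY$, so by the fact noted in Section~\ref{s:complexes of groups} that the fundamental group of a trivial complex of groups is the fundamental group of the underlying scwol, we have $\G = \pi_1(\G(\cY)) = \pi_1(\cY)$. Next, since $\cY$ is the scwol associated to the simplicial complex $Y$, its geometric realisation $|\cY|$ is the order complex of the poset of simplices of $Y$, i.e.\ the barycentric subdivision of $Y$, and hence is homeomorphic to $Y$. As the fundamental group of a scwol coincides with the fundamental group of its geometric realisation (see \cite[Chapter~III.$\cC$]{BH}), this gives $\G \cong \pi_1(Y)$. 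Here $Y$ is connected: by Proposition~\ref{p:covering FP} and Corollary~\ref{c:coverings}, $\G\bs\cX$ is isomorphic to $\cY$, and since $\cX$ is connected so are $\cY$ and hence $Y$.

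Then I would use that $Y$ is built from finitely many copies of the finite complexes $Z_1,\dots,Z_N$, glued only along their type~$\emptyset$ vertices, and that each $Z_k$, being the closed star in $\cX$ of the vertex $\s_{J_k}$, is contractible. Collapsing each of these finitely many contractible subcomplexes shows, as already asserted above, that $Y$ deformation retracts onto a finite graph $\Lambda$. It then follows that $\G \cong \pi_1(Y) \cong \pi_1(\Lambda)$, and the fundamental group of a finite connected graph is a finitely generated free group (free of rank $1 - \chi(\Lambda)$, where $\chi$ denotes Euler characteristic), which completes the argument.

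I do not expect a serious obstacle here, since the corollary is essentially a formal consequence of the construction in Proposition~\ref{p:covering FP}. The only points requiring (routine) care are the identification of the combinatorial fundamental group of the scwol $\cY$ with $\pi_1(Y)$, and checking that the collapses of the various copies of the $Z_k$ are mutually compatible along their shared type~$\emptyset$ vertices.
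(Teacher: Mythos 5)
Your proposal is correct and follows essentially the same route as the paper: identify $\G$ with the fundamental group of the underlying scwol $\cY$ (since the complex of groups is trivial), hence with $\pi_1(Y)$, and then use the already-noted fact that $Y$ deformation retracts onto a finite graph. The extra details you supply (connectedness of $Y$, contractibility of the copies of $Z_k$) are sound elaborations of the same argument.
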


\begin{proof} The lattice $\G$ is the fundamental group of a trivial complex of groups $\G(\cY)$.  Thus $\G$ is isomorphic to the fundamental group of the scwol $\cY$, and hence isomorphic to the (topological) fundamental group of the simplicial complex $Y$.  Since $Y$ deformation retracts onto a finite graph, we conclude that the lattice $\G$ is a finitely generated free group. 
\end{proof}

This completes the proof of Theorem \ref{t:FP}.

\end{document}